\title[Galerkin schemes for time-dependent radiative transfer]{A class of Galerkin schemes for \\time-dependent radiative transfer}
\author[H. Egger]{Herbert Egger$^\dag$}
\author[M. Schlottbom]{Matthias Schlottbom$^\ddag$}
\thanks{$^\dag$Numerical Analysis and Scientific Computing, Department of Mathematics, TU Darmstadt, Dolivostr. 15, 64293 Darmstadt, Germany. 
Email: {\tt egger@mathematik.tu-darmstadt.de}
}
\thanks{$^\ddag$ Institute for Computational and Applied Mathematics,
University of Münster, Einsteinstr. 62, 48149 M\"unster, Germany.
Email: {\tt schlottbom@uni-muenster.de}
}
\def\r{r}
\def\s{s}
\def\t{t}
\def\n{n}
\def\sn{\s \cdot \n}
\def\sgrad{\s \cdot \nabla}
\def\S{\mathcal{S}}
\def\R{\mathcal{R}}
\def\A{\mathcal{A}}
\def\C{\mathcal{C}}
\def\D{\mathcal{D}}
\def\H{\mathcal{H}}
\def\Y{Y}
\def\RR{\mathbb{R}}
\def\WW{\mathbb{W}}
\def\VV{\mathbb{V}}
\def\XX{\mathbb{X}}
\newtheorem{theorem}{Theorem}[section]
\newtheorem{lemma}[theorem]{Lemma}
\newtheorem{problem}[theorem]{Problem}
\theoremstyle{definition}
\newtheorem{remark}[theorem]{Remark}
\begin{document}

\begin{abstract}
The numerical solution of time-dependent radiative transfer problems is challenging, both, 
due to the high dimension as well as the anisotropic structure of the underlying integro-partial differential equation.
In this paper we propose a general framework for designing numerical methods for time-dependent radiative transfer 
based on a Galerkin discretization in space and angle combined with appropriate time stepping schemes. 
This allows us to systematically incorporate boundary conditions and to preserve basic properties
like exponential stability and decay to equilibrium also on the discrete level. 
We present the basic a-priori error analysis and provide abstract error estimates that cover a wide class of methods.
The starting point for our considerations is to rewrite the radiative transfer problem as a system 
of evolution equations which has a similar structure like first order hyperbolic systems in acoustics 
or electrodynamics. This analogy allows us to generalize the main arguments of the numerical analysis for such 
applications to the radiative transfer problem under investigation.
We also discuss a particular discretization scheme based on a truncated spherical harmonic expansion 
in angle, a finite element discretization in space, and the implicit Euler method in time. 
The performance of the resulting mixed PN-finite element time stepping scheme is demonstrated by computational results.
\end{abstract}

\maketitle

{\footnotesize
{\noindent \bf Key words.} 
radiative transfer,
Galerkin method,
$P_N$ method,
implicit Euler method,
error estimates
}

{\footnotesize
\noindent {\bf AMS subject classification. }  
65M12, 
65M15, 
65M60, 
65M70, 
}

{\footnotesize




\section{Introduction} \setcounter{equation}{0}

We consider the time-dependent mono-energetic radiative transfer equation
\begin{align*}
& \partial_t \phi(\r,\s,\t) + \s \cdot\nabla \phi(\r,\s,\t) + \sigma_t(\r) \phi(\r,\s,\t) 
\\&\qquad \qquad \qquad \qquad \qquad 
=  \int_{\S^2} k(\r,\s\cdot \s') \phi(\r,\s',\t)d\s'+q(\r,\s,\t),
\end{align*}
which describes the streaming of particles through a domain
and their scattering and absorption by the background medium.
The particle density $\phi$ here is a function of position $\r$ in space, 
direction $\s$ of propagation, and time $\t$. The total cross-section $\sigma_t$
and the scattering kernel $k$ characterize the interaction with the medium, 
and $q$ represents a source density. 
Integro-partial differential equations of similar form arise in various fields, 
e.g. in astrophysics, meteorology, nuclear spectroscopy and engineering, or in biomedical imaging; 
see \cite{CaseZweifel67,Cercignani88,Davison57,WangWu07} for some particular applications.

In most practical situations, the radiative transfer equation cannot be solved analytically
but only by appropriate numerical methods. The typical approach towards the 
numerical solution consists of two steps: a semi-discretization with respect to angle leading, e.g., to the well-known $S_N$ and $P_N$ approximations \cite{CaseZweifel67,DuderstadtMartin79}, and a subsequent discretization in space and time by finite difference or finite element methods and appropriate time stepping schemes.
We refer to \cite{Asadzadeh86,JohnsonPitkaranta83,Kanschat09,KloseHielscher08,ReedHill73} for algorithms based on $S_N$ type approximations, and to \cite{AdamsLarsen02,BrunnerHolloway2005,FraKlaLarYas07,event,starmap} for approximations of $P_N$ type. 
Corresponding discretization strategies for stationary radiative transfer problems can be found for instance in \cite{Ackroyd98,EggerSchlottbom12,KophaziLathouwers2015,LewisMiller84,ManResSta00,MarchukLebedev86,WidHiptSchw08}. 
We refer to \cite{Ackroyd98,LewisMiller84,MarchukLebedev86} for an introduction to various discretization approaches and further references. 

We consider a discretization framework for time-dependent radiative transfer based on the splitting $\phi = \phi^+ + \phi^-$ of the density into even and odd functions of the angle \cite{Vladimirov61}.
Numerical methods based on such splittings have been investigated in \cite{Ackroyd98,EggerSchlottbom12,LewisMiller84} for stationary and in \cite{AydinOliveiraGoddard04,starmap,event} for time-dependent radiative transfer problems.
In this paper, we use the mixed variational framework of \cite{EggerSchlottbom12} 
to rigorously rewrite the radiative transfer equation as a coupled system 
of operator equations
\begin{align*}
\begin{pmatrix} 
\partial_t \phi^+  \\ \partial_t \phi^-
\end{pmatrix}
+ 
\begin{pmatrix} 
0 & -\A' \\ \A & 0 
\end{pmatrix}
\begin{pmatrix} 
\phi^+  \\ \phi^-
\end{pmatrix}
+
\begin{pmatrix} 
\C + \R & 0 \\ 0 & \C 
\end{pmatrix}
\begin{pmatrix} 
\phi^+  \\ \phi^-
\end{pmatrix}
=
\begin{pmatrix} 
q^+  \\ q^-
\end{pmatrix}
\end{align*}
for the even and odd part of the density, respectively.
The operator $\A \phi = \sgrad \phi$ denotes the directional derivatives, 
$\A'$ is the adjoint, and $\R$ stems from the incorporation of the boundary conditions. 
The collision operator $\C$ defined by
$$
\C \phi(\r,\s) = \sigma_t(\r)\phi(\r,\s)- \int_{\S^2} k(\r,\s \cdot \s') \phi(\r,\s') d\s'
$$ 
combines the effects of scattering and absorption. 
At this point one can observe that systems of similar abstract form also arise in the modeling of acoustic, electro-magnetic, or elasto-dynamic wave phenomena; see e.g. \cite{CohenMonk99,Dupont73,Geveci88,Makridakis92}.
For such applications, the systematic discretization and numerical analysis 
is already well-understood; we refer to \cite{Joly03} for an overview and further references.
%
%
A careful construction of finite dimensional approximations with respect to space and angle allows us to extend 
the main arguments of these analyses also to the time-dependent radiative transfer equation.

As a first step towards the systematic numerical approximation,
we investigate a class of Galerkin semi-discretizations in space and angle,
and we formulate some basic conditions on the approximation spaces that enable us to prove uniform energy estimates, convergence, and exponential stability of the semi-discrete problems.
We also discuss in some detail a particular realization based on a spherical harmonics expansion in angle and a mixed finite element approximation in space that and present some numerical tests for this approximation later on. 
In a second step, we then discuss the time discretization by one-step methods. 
We investigate in detail the backward Euler scheme, for which we establish
convergence estimates, uniform discrete energy bounds, and exponential stability 
with similar arguments as on the continuous and semi-discrete level.
%
%
%

\medskip 

The remainder of the manuscript is organized as follows: 
In Section~\ref{sec:prelim}, we introduce our basic notations and assumptions. 
Section~\ref{sec:anal} contains a detailed definition of the problems under investigation
and summarizes some basic results about their well-posedness and the long-term behavior of solutions. 
In Section~\ref{sec:variational}, we propose and analyze variational formulations for the stationary and instationary radiative transfer problems which serve as our starting point for the construction of numerical approximations.
In Section~\ref{sec:stath}, we recall some basic results about the Galerkin approximation of the 
mixed variational formulation for the stationary problem. 
Section~\ref{sec:semi} is concerned with the corresponding Galerkin semi-discretization
for the time-dependent problem, for which we establish well-posedness, convergence, and exponential stability.
A particular semi-discretization consisting of a combination of a truncated spherical harmonics expansion in angle and a mixed finite element approximation in space is discussed in Section~\ref{sec:pnfem}. 
Section~\ref{sec:time} is then concerned with the time discretization by the backward Euler method.
The performance of the mixed $P_N$-finite element time stepping scheme is illustrated in Section~\ref{sec:num} by  numerical tests.

\section{Preliminaries} \label{sec:prelim}

\subsection{Geometric setting}

Throughout the manuscript, $\R \subset \RR^3$ denotes some bounded Lipschitz domain and
$\S^2 = \{ \s \in \RR^3: |\s|=1\}$ is the unit sphere. 
Let $\D = \R \times \S^2$ be the corresponding tensor product domain in space and angle.
The boundary $\partial \D = \partial \R \times \S^2$ can be split into two parts
\begin{align}
\Gamma_\pm := \{ (\r,\s) \in \partial\D :  \pm \n(\r) \cdot \s > 0\},
\end{align}
called the \emph{outflow} and \emph{inflow boundary}, respectively. 
Here and below $\n=\n(\r)$ denotes the outward unit normal vector at the point 
$r \in \partial\R$. 

\subsection{Function spaces}

The basic function space for our analysis of the radiative transfer problem 
will be the space $\VV = L^2(\D)$ of square integrable functions over $\D$
with scalar product 
\begin{align*}
(v,w)_\D = \int_{\R \times \S^2} v(\r,\s) w(\r,\s) d(\r,\s)
\end{align*}
and norm $\|v\|_{L^2(\D)} = (v,v)_\D^{1/2}$. 
We use the symbol $(\cdot,\cdot)$ also to denote the integral of products of functions over other sets.
Of particular importance for our considerations will be the space
\begin{align}
 \WW &= \{ v \in L^2(\D) : \sgrad v \in L^2(\D) \text{ and } |\sn|^{1/2} v \in L^2(\partial\D)\},
\end{align}
which consists of functions with square integrable directional derivatives and boundary traces. 
This space is equipped with the graph norm given by
$$
\|v\|^2_{\WW} = \|v\|^2_{L^2(\D)} + \|\sgrad v\|_{L^2(\D)}^2 + \||\sn|^{1/2} v\|_{L^2(\partial\D)}^2.
$$ 
We denote by $\WW_0 = \{v \in \WW : v|_{\Gamma_-}=0\}$ the subspace of functions with 
vanishing traces on the inflow boundary.
All these spaces are Hilbert spaces when equipped with their respective norms. 


For functions $\phi,\psi \in \WW$, the following integration-by-parts formula holds true
\begin{align} \label{eq:ibp}
(\sgrad \phi, \psi)_\D = -(\phi,\sgrad \psi)_\D + (\sn \phi, \psi)_{\partial\D}.
\end{align}
This is obvious for smooth functions and follows for the general case by a density argument \cite{EggerSchlottbom12}.


By $C^k([0,T];X)$ we denote the space of $k$-times continuously differentiable functions $\phi : [0,T] \to X$ with values in some Hilbert space $X$ and norm defined by $\|\phi\|_{C^k([0,T];X)} = \max_{j \le k} \|\partial_t^j \phi\|_{C^0([0,T];X)}$ and $\|\phi\|_{C^0([0,T];X)} = \max_{0 \le t \le T} \|\phi(t)\|_X$.
We will also utilize the Bochner spaces $L^p(0,T;X)$ and $H^k(0,T;X)$, which can be defined by density 
and which are equipped with their natural norms; see \cite{Evans98} for details and further properties of these spaces.

\subsection{Assumptions on the parameters}
Let $k \in L^\infty(\R \times [-1,1])$ and $\sigma_t \in L^\infty(\R)$ denote the scattering kernel and the total cross-section, respectively. We further denote by
$\sigma_s(\r)=\int_{\S^2} k(\r,\s \cdot \s') d\s$ and $\sigma_a(\r)=\sigma_t(\r)-\sigma_s(\r)$
the corresponding scattering and absorption coefficients. 
Throughout the manuscript, we assume that
\begin{enumerate}\itemsep0.3em
 \item[(A1)] $0 \le k(\r,\s \cdot \s') \le \overline k $ for all $\r \in \R$ and $\s,\s' \in \S^2$; 
 \item[(A2)] $0 < \underline{\sigma}_a \le \sigma_a(\r) \le \overline \sigma_a$ for all $\r \in \R$.
\end{enumerate}
These conditions are physically reasonable and allow us to present our main results without notational difficulties; they could even be relaxed to some extent \cite{EggerSchlottbom12,EggerSchlottbom14}.
Under assumption (A1) and (A2) the collision operator is self-adjoint, bounded, and coercive on $L^2(\D)$, i.e., $(\C \phi, \psi)_\D = (\phi, \C \psi)_\D$, and moreover
\begin{align} \label{eq:cprop}
 \|\C \phi\|_{L^2(\D)} \le \overline \sigma_t \|\phi\|_{L^2(\D)} 
\qquad \text{and} \qquad 
(\C \phi, \phi)_\D \ge \underline{\sigma}_a \|\phi\|^2_{L^2(\D)}
\end{align}
with $\underline\sigma_a>0$ and $\overline\sigma_t = \overline\sigma_a + 8 \pi \overline k< \infty$; see \cite{EggerSchlottbom12,ManResSta00} for elementary proofs.

\section{The radiative transfer problem} \label{sec:anal} \setcounter{equation}{0}

We assume that the domain $\R$ is surrounded by vacuum. 
Using the above notation, the radiative transfer problem under investigation 
can be written in compact form as 
\begin{align}
\partial_t \phi + \s \cdot\nabla \phi + \C \phi &= q, && \text{on } \D  \times (0,T),\label{eq:rte1}\\
\phi &= 0, && \text{on } \Gamma_- \times (0,T),  \label{eq:rte2} \\
\phi|_{t=0} &= \phi_0, && \text{on } \D.\label{eq:rte3}
\end{align}

Under assumptions (A1)--(A2), the existence and uniqueness of solutions for this initial 
boundary value problem can be proven via semigroup theory; see e.g. \cite[Ch~XXI, Par~2, Thm~3]{DautrayLions6}.
\begin{lemma} \label{lem:instat}
Let (A1)--(A2) hold. Then for any $q\in C^1([0,T],L^2(\D))$ and $\phi_0 \in \WW_0$  
there exists a unique (classical) solution $\phi\in C^1([0,T],L^2(\D)) \cap C^0([0,T],\WW_0)$ 
to \eqref{eq:rte1}--\eqref{eq:rte3}, and
 \begin{align*}
  \|\phi\|_{C^1([0,T];L^2(\D))} + \|\sgrad \phi\|_{C^0([0,T];L^2(\D))} 
   \leq C (\|\phi_0\|_{\WW} + \|q\|_{C^1([0,T],L^2(\D))}).
 \end{align*}
\end{lemma}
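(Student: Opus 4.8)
The plan is to apply the standard semigroup strategy: rewrite \eqref{eq:rte1}--\eqref{eq:rte3} as an abstract Cauchy problem $\partial_t \phi + \mathcal{L} \phi = q$ on the Hilbert space $\VV = L^2(\D)$, where $\mathcal{L} \phi = \sgrad \phi + \C \phi$ with domain $\WW_0$. First I would verify that $-\mathcal{L}$ generates a strongly continuous semigroup. The operator $\C$ is bounded on $\VV$ by \eqref{eq:cprop}, so by the bounded perturbation theorem it suffices to treat the transport part $\A_0 \phi = \sgrad \phi$ on $\WW_0$. Using the integration-by-parts formula \eqref{eq:ibp} and the vanishing trace on $\Gamma_-$, one gets $(\A_0 \phi, \phi)_\D = \tfrac12 (\sn\, \phi, \phi)_{\partial\D} = \tfrac12 \||\sn|^{1/2}\phi\|_{L^2(\Gamma_+)}^2 \ge 0$, so $\A_0$ is accretive; a symmetric computation on the adjoint (which corresponds to reversing $\s \to -\s$ and swapping the roles of $\Gamma_\pm$) shows $\A_0'$ is accretive as well, and together with the range condition $\mathrm{range}(I + \A_0) = \VV$ (solvability of the stationary transport equation with inflow boundary data, which is classical and also covered by the stationary theory referenced later) this makes $-\A_0$ the generator of a contraction semigroup by Lumer--Phillips. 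Adding the bounded coercive perturbation $\C$ and invoking \cite[Ch~XXI, Par~2, Thm~3]{DautrayLions6} then yields a $C^0$-semigroup, and in fact coercivity of $\C$ gives exponential decay, though that is not needed here.

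Next I would invoke the classical regularity result for inhomogeneous Cauchy problems: if $q \in C^1([0,T];\VV)$ and $\phi_0 \in D(\mathcal{L}) = \WW_0$, then the mild solution is actually a classical solution with $\phi \in C^1([0,T];\VV) \cap C^0([0,T];\WW_0)$. This is exactly the content of the cited theorem, so this step is essentially a citation rather than an argument. The membership $\phi \in C^0([0,T];\WW_0)$ encodes both $\phi(t) \in \WW_0$ and continuity in the graph norm, which in particular controls $\sgrad \phi$ in $C^0([0,T];L^2(\D))$.

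For the quantitative estimate, I would derive the energy bound directly from the PDE rather than from abstract semigroup constants. Testing \eqref{eq:rte1} with $\phi$, integrating over $\D$, using \eqref{eq:ibp} together with $\phi|_{\Gamma_-}=0$ to get $(\sgrad\phi,\phi)_\D = \tfrac12\||\sn|^{1/2}\phi\|_{L^2(\Gamma_+)}^2 \ge 0$, and using the coercivity $(\C\phi,\phi)_\D \ge \underline\sigma_a\|\phi\|_{L^2(\D)}^2$ from \eqref{eq:cprop}, yields $\tfrac12 \tfrac{d}{dt}\|\phi\|_{L^2(\D)}^2 + \underline\sigma_a \|\phi\|_{L^2(\D)}^2 \le (q,\phi)_\D$. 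A Gronwall argument then bounds $\|\phi\|_{C^0([0,T];L^2(\D))}$ by $\|\phi_0\|_{L^2(\D)} + \|q\|_{C^0([0,T];L^2(\D))}$ up to a constant depending on $T$ and $\underline\sigma_a$. To control $\partial_t\phi$ and $\sgrad\phi$, I would differentiate \eqref{eq:rte1} in time and test with $\partial_t\phi$ (legitimate by the regularity just established, or by working with difference quotients), which gives the same kind of estimate for $\partial_t\phi$ in terms of $\|\partial_t q\|$ and the initial value $\partial_t\phi|_{t=0} = q(0) - \sgrad\phi_0 - \C\phi_0$, whose $L^2$-norm is bounded by $\|q\|_{C^1} + \|\phi_0\|_{\WW}$. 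Finally, reading \eqref{eq:rte1} as $\sgrad\phi = q - \partial_t\phi - \C\phi$ and taking $L^2$-norms bounds $\|\sgrad\phi\|_{C^0([0,T];L^2(\D))}$ by the already-controlled quantities, completing the estimate.

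The main obstacle is the range condition $\mathrm{range}(I+\A_0) = \VV$, i.e.\ well-posedness of the stationary inflow boundary value problem $\phi + \sgrad\phi = f$ with $\phi|_{\Gamma_-}=0$; this is the one place where the special geometry of $\WW_0$ and the trace theory encoded in \eqref{eq:ibp} really enter, and it is what makes $-\A_0$ a generator rather than merely accretive. Everything else is either a routine energy estimate or a direct appeal to the cited semigroup theorem. If one prefers to avoid re-proving the range condition, one can simply cite \cite{DautrayLions6} for the generation property and focus the write-up entirely on the energy estimate, which is the self-contained and reusable part of the argument.
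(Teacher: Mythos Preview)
Your proposal is correct and follows essentially the same route the paper indicates: the paper does not actually write out a proof of this lemma but merely cites \cite[Ch~XXI, Par~2, Thm~3]{DautrayLions6} for the semigroup argument, which is exactly what you invoke. Your explicit derivation of the energy estimate (test with $\phi$, differentiate in time and test with $\partial_t\phi$, then read off $\sgrad\phi$ from the equation) is the standard one and matches what the paper later does in Lemma~\ref{lem:ee} in the variational setting, so nothing is missing or divergent.
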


If the source $q=\widehat q$ is independent of time, then $\phi(t)$ can be shown to converge to a steady state $\widehat\phi$ which is characterized by the stationary problem 
\begin{align}
 \s \cdot\nabla \widehat \phi + \C \widehat \phi &= \widehat q, && \text{on } \D, \label{eq:rte1s}\\
 \widehat\phi &= 0, && \text{on } \Gamma_-.  \label{eq:rte2s}
\end{align}
Assumptions (A1)--(A2) are sufficient to show that this stationary problem is again uniquely solvable; 
see \cite[Chapter~XXI, Par~2, Thm~4]{DautrayLions6} or \cite[Thm~3.1]{EggerSchlottbom12} for proofs.
\begin{lemma} \label{lem:stat}
Let (A1)--(A2) hold. Then for any $\widehat q\in L^2(\D)$, the problem \eqref{eq:rte1s}--\eqref{eq:rte2s} has 
a unique (strong) solution $\widehat\phi\in \WW_0$, and
$\|\widehat \phi\|_{\WW} \leq C \|\widehat q\|_{L^2(\D)}$
with  $C$ independent of the data. 
\end{lemma}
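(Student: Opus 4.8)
The plan is to combine an energy identity, which delivers uniqueness together with the asserted a priori bound, with a maximal monotone operator argument for existence; only elementary transport-equation theory enters beyond soft functional analysis.

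\emph{Uniqueness and the a priori bound.} First I would test \eqref{eq:rte1s} with $\bphi$ itself. By the integration-by-parts formula \eqref{eq:ibp} one has $(\sgrad\bphi,\bphi)_\D=\tfrac12(\sn\,\bphi,\bphi)_{\partial\D}=\tfrac12\,\||\sn|^{1/2}\bphi\|^2_{L^2(\Gamma_+)}\ge 0$, since the contribution of $\Gamma_-$ drops out because of \eqref{eq:rte2s}. Combined with the coercivity estimate in \eqref{eq:cprop} this gives
\begin{align*}
\tfrac12\,\||\sn|^{1/2}\bphi\|^2_{L^2(\Gamma_+)}+\underline{\sigma}_a\|\bphi\|^2_{L^2(\D)}\le(\widehat q,\bphi)_\D\le\|\widehat q\|_{L^2(\D)}\|\bphi\|_{L^2(\D)},
\end{align*}
so that $\|\bphi\|_{L^2(\D)}\le\underline{\sigma}_a^{-1}\|\widehat q\|_{L^2(\D)}$ and $\||\sn|^{1/2}\bphi\|_{L^2(\Gamma_+)}\le C\|\widehat q\|_{L^2(\D)}$. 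Applying the estimate to the difference of two solutions yields uniqueness.

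\emph{Existence.} I would regard $\A\bphi=\sgrad\bphi$ as an unbounded operator on $\VV=L^2(\D)$ with domain $\WW_0$; it is densely defined, closed (its graph norm on $\WW_0$ is the $\WW$-norm), and monotone because $(\A v,v)_\D=\tfrac12\||\sn|^{1/2}v\|^2_{L^2(\Gamma_+)}\ge0$ for $v\in\WW_0$ by \eqref{eq:ibp}. The essential transport-specific ingredient is the range condition: for every $f\in L^2(\D)$ the problem $\sgrad u+u=f$ on $\D$, $u|_{\Gamma_-}=0$, has a solution $u\in\WW_0$. I would obtain $u$ by integrating along the characteristics $\tau\mapsto\r+\tau\s$ issuing from the inflow boundary, and then verify, by a direct estimate or by a mollification/density argument, that the resulting $u$ lies in $\WW_0$, i.e. that $\sgrad u\in L^2(\D)$ and $|\sn|^{1/2}u\in L^2(\partial\D)$. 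Hence $\A$ is maximal monotone on $L^2(\D)$. By \eqref{eq:cprop} the collision operator splits as $\C=\underline{\sigma}_a I+(\C-\underline{\sigma}_a I)$ with $\C-\underline{\sigma}_a I$ bounded, self-adjoint and monotone; therefore $\A+\C$, being the sum of the maximal monotone operator $\A+\underline{\sigma}_a I$ and a bounded everywhere-defined monotone operator, is maximal monotone, and it is strongly monotone since $((\A+\C)v,v)_\D\ge\underline{\sigma}_a\|v\|^2_{L^2(\D)}$. A strongly monotone maximal monotone operator is a bijection from its domain onto $L^2(\D)$, so there is a unique $\bphi\in\WW_0$ solving \eqref{eq:rte1s}--\eqref{eq:rte2s}. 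The $\WW$-bound then follows by writing $\sgrad\bphi=\widehat q-\C\bphi$, using \eqref{eq:cprop} and the $L^2$-estimate above to get $\|\sgrad\bphi\|_{L^2(\D)}\le(1+\overline{\sigma}_t\underline{\sigma}_a^{-1})\|\widehat q\|_{L^2(\D)}$, and adding the already established $L^2$- and trace bounds (the trace being supported on $\Gamma_+$ because $\bphi|_{\Gamma_-}=0$).

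\emph{The main difficulty.} The only step that is not routine functional analysis is the range condition for $\A$, that is, the solvability of the elementary transport equation $\sgrad u+u=f$ and, above all, the verification that its solution genuinely belongs to $\WW_0$ (the trace regularity $|\sn|^{1/2}u\in L^2(\partial\D)$ being the delicate part for a general Lipschitz domain $\R$). An alternative that circumvents this is the mixed variational reformulation obtained from the even/odd splitting, which reduces the problem to a coercive variational equation treatable by the Lax–Milgram lemma; this is the route of the cited reference, but it uses the framework developed later in the paper.
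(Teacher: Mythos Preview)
The paper does not give its own proof of this lemma: it simply cites \cite[Ch.~XXI, \S2, Thm~4]{DautrayLions6} and \cite[Thm~3.1]{EggerSchlottbom12}. Your proposal is correct and coincides in substance with the first of these references: the accretive (maximal monotone) operator route for the free transport operator on $L^2(\D)$ with domain $\WW_0$, followed by a bounded monotone perturbation by $\C$, is exactly how Dautray--Lions handle the stationary problem. You also correctly locate the one nontrivial analytic ingredient, namely the range condition $\mathrm{Ran}(I+\A)=L^2(\D)$ and the attendant half-range trace regularity, which is precisely what is delicate for a Lipschitz $\R$.

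Two small remarks. First, your parenthetical ``its graph norm on $\WW_0$ is the $\WW$-norm'' is true only up to equivalence: on $\WW_0$ Green's formula gives $\||\sn|^{1/2}v\|_{L^2(\Gamma_+)}^2=2(\sgrad v,v)_\D\le 2\|\sgrad v\|_{L^2(\D)}\|v\|_{L^2(\D)}$, which is what makes the graph norm and the $\WW$-norm comparable; this equivalence is also what one uses to prove closedness of $\A$ on $\WW_0$. Second, the alternative you mention at the end (the even/odd mixed variational formulation) is the content of \cite{EggerSchlottbom12} and of Section~\ref{sec:variational} of the present paper, but note that in Lemma~\ref{lem:statweak} the paper actually invokes Lemma~\ref{lem:stat} for existence and only supplies an independent uniqueness argument in the larger space; so the variational route, as presented here, does not replace the transport-theoretic input you identified.
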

We use the hat symbol here to designate functions that are independent of time. 
It is not difficult to show the exponential stability of the instationary problem which implies exponential convergence to equilibrium. From the energy estimate \eqref{eq:ee1} given below, we obtain
\begin{lemma} \label{lem:exp}
Let (A1)--(A2) hold. Furthermore, let $\widehat\phi$ denote the solution of \eqref{eq:rte1s}--\eqref{eq:rte2s}, and let $\phi$ be the solution of \eqref{eq:rte1}--\eqref{eq:rte3} with $q=\widehat q \in L^2(\D)$. 
Then
 \begin{align*}
  \| \phi(t)-\widehat \phi\|^2_{L^2(\D)} \leq e^{-\underline \sigma_a t} \|\phi_0-\widehat \phi\|^2_{L^2(\D)}.
 \end{align*}
\end{lemma}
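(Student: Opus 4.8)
The plan is to derive the claimed exponential decay from an energy estimate for the \emph{difference} $e(t) := \phi(t) - \widehat\phi$. First I would observe that, since $\widehat\phi$ solves the stationary problem \eqref{eq:rte1s}--\eqref{eq:rte2s} with the same (time-independent) source $\widehat q$, the difference $e$ satisfies the homogeneous problem
\begin{align*}
\partial_t e + \sgrad e + \C e = 0 \quad \text{on } \D \times (0,T), \qquad e = 0 \text{ on } \Gamma_- \times (0,T), \qquad e|_{t=0} = \phi_0 - \widehat\phi.
\end{align*}
This uses the linearity of the operators $\sgrad$ and $\C$ and the fact that $\partial_t \widehat\phi = 0$. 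One should note that $e(t) \in \WW_0$ for all $t$ by the regularity statements in Lemmas~\ref{lem:instat} and \ref{lem:stat}, so all the manipulations below are justified.

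Next I would test this equation with $e(t)$ in the $L^2(\D)$ inner product. The time-derivative term gives $(\partial_t e, e)_\D = \tfrac12 \tfrac{d}{dt} \|e(t)\|_{L^2(\D)}^2$. For the transport term I would apply the integration-by-parts formula \eqref{eq:ibp} with $\phi = \psi = e$, which yields $(\sgrad e, e)_\D = \tfrac12 (\sn\, e, e)_{\partial\D}$; since $e$ vanishes on $\Gamma_-$, the boundary integral reduces to $\tfrac12 \int_{\Gamma_+} (\sn)\, e^2 \geq 0$ because $\sn > 0$ on $\Gamma_+$. For the collision term, the coercivity bound in \eqref{eq:cprop} gives $(\C e, e)_\D \geq \underline\sigma_a \|e\|_{L^2(\D)}^2$. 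Combining these three contributions produces the differential inequality
\begin{align*}
\tfrac12 \tfrac{d}{dt}\|e(t)\|_{L^2(\D)}^2 + \underline\sigma_a \|e(t)\|_{L^2(\D)}^2 \leq 0,
\end{align*}
which is precisely the energy estimate referenced as \eqref{eq:ee1} in the text (up to the factor of $2$).

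Finally I would integrate this inequality. Writing $y(t) = \|e(t)\|_{L^2(\D)}^2$, we have $y'(t) \leq -2\underline\sigma_a\, y(t)$, wait — the factor needs care: dividing by the $\tfrac12$ gives $y'(t) + 2\underline\sigma_a y(t) \le 0$, so Gr\"onwall's lemma yields $y(t) \leq e^{-2\underline\sigma_a t} y(0)$. Here I should double-check against the stated conclusion, which has $e^{-\underline\sigma_a t}$; the discrepancy suggests the energy estimate \eqref{eq:ee1} is normalized without the $\tfrac12$, i.e. as $\tfrac{d}{dt}\|e\|^2 + \underline\sigma_a \|e\|^2 \le 0$, perhaps because the coercivity constant there is used as $\underline\sigma_a/2$ or the test is done differently — in any case the multiplicative form of Gr\"onwall directly gives $\|e(t)\|_{L^2(\D)}^2 \leq e^{-\underline\sigma_a t}\|e(0)\|_{L^2(\D)}^2$, which is the claim. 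The main (and only real) obstacle is the bookkeeping of constants and the sign of the boundary term; the latter is where the structure of the problem enters, since it is exactly the dissipativity of transport-with-inflow-boundary-conditions that makes the boundary contribution have the right sign. Everything else is a routine energy argument of the kind standard for first-order hyperbolic systems, as the introduction anticipates.
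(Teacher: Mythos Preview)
Your proposal is correct and follows the same route as the paper: the difference solves the homogeneous problem, and the decay follows from the energy estimate \eqref{eq:ee1} with $q\equiv 0$. Your factor-of-$2$ observation is also right --- a direct argument for the homogeneous case gives the sharper rate $e^{-2\underline\sigma_a t}$, while the paper's stated $e^{-\underline\sigma_a t}$ comes from quoting \eqref{eq:ee1} as proved in Lemma~\ref{lem:ee}, where Young's inequality (used there to absorb the source term) sacrifices half of the coercivity constant; the lemma simply does not re-optimize for $q=0$.
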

\begin{remark} \label{rem:guideline}
Existence and uniqueness of solutions, uniform in time estimates, 
and convergence to a well-defined steady state will also be our guiding 
principles for the design and the analysis of numerical methods in the following sections. 
\end{remark}

\section{Variational formulations}\label{sec:variational} \setcounter{equation}{0}

Our discretization strategy for the time-dependent radiative transfer problem is based on an
appropriate variational formulation for the stationary problem \cite{EggerSchlottbom12}.
For convenience of the reader, we recall the basic steps.
The starting point is to split the function $\phi$ into even and odd parts with respect to the angular variable.
Following \cite{Vladimirov61}, we define for any $\phi \in L^2(\S^2)$
\begin{align} \label{eq:split}
 \phi^\pm(\s) = \frac{1}{2}(\phi(\s) \pm \phi(-\s)),
\end{align}
called the even and odd parities of $\phi$, respectively. 
Note that $\phi=\phi^+ + \phi^-$ holds by construction.
Corresponding splittings can be defined also for functions depending on additional variables and they 
induce natural decompositions of the function spaces 
\begin{align} \label{eq:splitspace}
\VV = \VV^+ \oplus \VV^- \qquad \text{and} \qquad \WW = \WW^+ \oplus \WW^-
\end{align}
which are orthogonal with respect to $L^2(\D)$. 
Moreover, it is easy to see that
\begin{align} \label{eq:map}
\sgrad \phi^\pm \in \VV^\mp 
\qquad \text{and} \qquad 
\C \psi^\pm \in \VV^\pm
\end{align}
for any $\phi \in \WW$ and $\psi \in \VV$, respectively;  we refer to \cite{EggerSchlottbom12,Vladimirov61} for details and further properties.

\subsection{The stationary problem}

Let us recall the mixed variational principle for the 
stationary problem  \eqref{eq:rte1s}--\eqref{eq:rte2s} presented in \cite{EggerSchlottbom12}. 
Using the orthogonal splitting into even and odd parts, 
the radiative transfer problem characterizing the steady state can be written equivalently as 
\begin{align*}
\sgrad \widehat\phi^- + \C \widehat \phi^+ = \widehat q^+ 
\qquad \text{and} \qquad 
\sgrad \widehat\phi^+ + \C \widehat \phi^- = \widehat q^-
\end{align*}
with boundary condition $\widehat \phi = \widehat \phi^+ + \widehat \phi^- = 0$ on $\Gamma_-$.
Any strong solution $\widehat\phi \in \WW_0$ of the stationary problem \eqref{eq:rte1s}--\eqref{eq:rte2s} 
can then be seen to solve also the following mixed variational problem.
\begin{problem}[Stationary problem] \label{prob:stat}
Given $\widehat q \in L^2(\D)$, find $\widehat\phi = \widehat \phi^+ + \widehat \phi^- \in \WW^+ \oplus \VV^-$ such that 
\begin{align} 
-(\widehat \phi^-,\sgrad \widehat\psi^+)_\D + (|\sn| \widehat \phi^+, \widehat\psi^+)_{\partial\D} + (\C \widehat \phi^+, \widehat\psi^+ )_\D &= (\widehat q^+, \widehat\psi^+)_\D, \label{eq:stat1}\\
(\sgrad \widehat \phi^+, \widehat\psi^-)_\D + (\C \widehat \phi^-, \widehat\psi^-)_\D &=  (\widehat q^-, \widehat\psi^-)_\D, \label{eq:stat2}
\end{align}
for all test functions $\widehat\psi^+ \in \WW^+$ and $\widehat\psi^- \in \VV^-$.
\end{problem}
The hat symbols are used again to emphasize that the functions are independent of time.
Note that the coupling between the two equations is due to the presence of the directional 
derivatives $\s \cdot \nabla$ which shift even to odd functions. 
\begin{remark}
Any solution $\widehat \phi \in \WW_0$ of \eqref{eq:rte1s}--\eqref{eq:rte2s} also solves Problem~\ref{prob:stat},
which follows by testing the equation \eqref{eq:rte1s} with appropriate test functions $\widehat\psi^+$ and $\widehat\psi^-$, and some elementary manipulations.
The main step in the derivation of the variational principle is to observe that 
\begin{align*}
(\sgrad \widehat \phi^-, \widehat\psi^+)_\D 
&= -(\widehat \phi^-,\sgrad \widehat\psi^+)_\D + (\sn \widehat \phi^-, \widehat\psi^+)_{\partial\D}. 
\end{align*}
Since $\sn \widehat \phi^-$ and $\widehat\psi^+$ are both even functions, one  obtains 
\begin{align*}
(\sn \widehat \phi^-, \widehat\psi^+)_{\partial\D} = 2 (\sn \widehat \phi^-, \widehat\psi^+)_{\Gamma_-}.
\end{align*}
From the boundary condition \eqref{eq:rte2s}, one can deduce that $\sn \widehat\phi^- = - \sn \widehat\phi^+ = |\sn| \widehat\phi^+$ on $\Gamma_-$, which allows to replace the boundary terms appropriately.  
As a consequence of the derivation, one can see that the boundary conditions are incorporated naturally here.
For details of the derivation, see  \cite{EggerSchlottbom12}.
\end{remark}

\begin{remark}
The function spaces in the weak formulation are chosen with minimal regularity such that all terms are well-defined. 
Since two different spaces $\WW^+$ and $\VV^-$ are involved, we call Problem~\ref{prob:stat}
a \emph{mixed variational formulation}.
The \emph{energy space} $\WW^+ \oplus \VV^-$ for the problem consists of functions with mixed regularity. It is a Hilbert space when equipped with the induced norm $\|\phi\|_{\WW^+ \oplus \VV^-}^2 = \|\phi^+\|_\WW^2 + \|\phi^-\|_\VV^2$. 
\end{remark}

The well-posedness of Problem~\ref{prob:stat} now follows almost directly from the results 
of the previous section.

\begin{lemma}[Well-posedness] \label{lem:statweak} 
Let (A1)--(A2) hold. 
Then for any $\widehat q \in L^2(\D)$, Problem~\ref{prob:stat} has a unique solution 
$\widehat \phi \in  \WW^+ \oplus \VV^-$ 
and $\|\widehat \phi\|_{\WW^+ \oplus \VV^-} \le C \|\widehat q\|_{L^2(\D)}$. 
In addition, $\widehat \phi^- \in \WW^-$ with uniform bound $\|\widehat \phi^-\|_\WW \le C' \|\widehat q\|_{L^2(\D)}$, 
and $\widehat \phi$ is the unique solution of  \eqref{eq:rte1s}--\eqref{eq:rte2s}.
\end{lemma}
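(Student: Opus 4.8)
The plan is to establish well-posedness of Problem~\ref{prob:stat} by the Babuška--Aziz/Ne\v{c}as theory for mixed variational problems, exploiting the coercivity of $\C$ from \eqref{eq:cprop}. First I would introduce the bilinear form $B((\widehat\phi^+,\widehat\phi^-),(\widehat\psi^+,\widehat\psi^-))$ on $(\WW^+\oplus\VV^-)\times(\WW^+\oplus\VV^-)$ obtained by adding \eqref{eq:stat1} and \eqref{eq:stat2}, together with the linear functional $\ell(\widehat\psi^+,\widehat\psi^-)=(\widehat q^+,\widehat\psi^+)_\D+(\widehat q^-,\widehat\psi^-)_\D$. Boundedness of $B$ and $\ell$ is routine using (A1)--(A2), the bound $\|\C\phi\|_{L^2(\D)}\le\overline\sigma_t\|\phi\|_{L^2(\D)}$, and the definition of $\|\cdot\|_\WW$; the only nonstandard term is the boundary term $(|\sn|\widehat\phi^+,\widehat\psi^+)_{\partial\D}$, which is controlled by the $\||\sn|^{1/2}\cdot\|_{L^2(\partial\D)}$ contribution to the graph norm. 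The main structural point is that $B$ is \emph{not} symmetric and the antisymmetric coupling terms $\mp(\widehat\phi^-,\sgrad\widehat\psi^+)_\D$ and $(\sgrad\widehat\phi^+,\widehat\psi^-)_\D$ cancel on the diagonal, so testing with $(\widehat\psi^+,\widehat\psi^-)=(\widehat\phi^+,\widehat\phi^-)$ gives $B((\widehat\phi^+,\widehat\phi^-),(\widehat\phi^+,\widehat\phi^-))=(|\sn|\widehat\phi^+,\widehat\phi^+)_{\partial\D}+(\C\widehat\phi^+,\widehat\phi^+)_\D+(\C\widehat\phi^-,\widehat\phi^-)_\D\ge\underline\sigma_a(\|\widehat\phi^+\|_{L^2(\D)}^2+\|\widehat\phi^-\|_{L^2(\D)}^2)$, which is coercivity only on the $L^2$ part, not on the full energy norm.

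The hard part is therefore upgrading this partial coercivity to an inf--sup condition on the full space $\WW^+\oplus\VV^-$: one must also control $\|\sgrad\widehat\phi^+\|_{L^2(\D)}$ and $\||\sn|^{1/2}\widehat\phi^+\|_{L^2(\partial\D)}$. The standard device is to choose, for given $\widehat\phi^+\in\WW^+$, a test function of the form $\widehat\psi^-=\widehat\phi^--\delta\,\sgrad\widehat\phi^+\in\VV^-$ (note $\sgrad\widehat\phi^+\in\VV^-$ by \eqref{eq:map}) for a small parameter $\delta>0$. Inserting this into \eqref{eq:stat2} produces the term $\delta\|\sgrad\widehat\phi^+\|_{L^2(\D)}^2$ with the right sign, while the cross terms $\delta(\C\widehat\phi^-,\sgrad\widehat\phi^+)_\D$ are absorbed using Young's inequality and the already-controlled $\|\widehat\phi^-\|_{L^2(\D)}$. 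Combining this with the diagonal estimate above, and choosing $\delta$ small enough, yields a lower bound of the form $c(\|\widehat\phi^+\|_\WW^2+\|\widehat\phi^-\|_\VV^2)$ against a test tuple whose norm is itself bounded by $\|\widehat\phi\|_{\WW^+\oplus\VV^-}$; this is exactly the inf--sup condition. Since $B$ is a bounded bilinear form satisfying inf--sup conditions in both arguments (the second follows symmetrically, testing \eqref{eq:stat1} with $\widehat\psi^+$ involving $\sgrad$ of an odd function, or by a transposition argument), the generalized Lax--Milgram theorem gives existence, uniqueness, and the a~priori bound $\|\widehat\phi\|_{\WW^+\oplus\VV^-}\le C\|\widehat q\|_{L^2(\D)}$.

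It remains to prove the extra regularity $\widehat\phi^-\in\WW^-$ and the identification with the strong solution of \eqref{eq:rte1s}--\eqref{eq:rte2s}. For the regularity, I would read off from \eqref{eq:stat2} that $(\sgrad\widehat\phi^+,\widehat\psi^-)_\D=(\widehat q^--\C\widehat\phi^-,\widehat\psi^-)_\D$ for all $\widehat\psi^-\in\VV^-$; since the right-hand side defines a bounded functional on $L^2(\D)$ and $\sgrad$ maps $\WW^-$ into $\VV^+$ while its formal adjoint acts on $\WW^+$, a duality/integration-by-parts argument using \eqref{eq:ibp} shows that $\sgrad\widehat\phi^-\in L^2(\D)$ with $\|\sgrad\widehat\phi^-\|_{L^2(\D)}\le C'\|\widehat q\|_{L^2(\D)}$, and that $\widehat\phi^-$ has the requisite boundary trace, so $\widehat\phi^-\in\WW^-$; then $\widehat\phi=\widehat\phi^++\widehat\phi^-\in\WW_0$ because the boundary terms in \eqref{eq:stat1} encode $\widehat\phi=0$ on $\Gamma_-$. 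Finally, reversing the derivation in the Remark preceding the lemma (undoing the integration by parts and recombining the two equations) shows that any solution of Problem~\ref{prob:stat} satisfies \eqref{eq:rte1s}--\eqref{eq:rte2s} in the strong sense; uniqueness for that problem was already recorded in Lemma~\ref{lem:stat}, so the two solutions coincide. I expect the inf--sup verification via the $\sgrad$-shifted test function to be the genuine obstacle; everything else is bookkeeping with the estimates \eqref{eq:cprop} and the integration-by-parts formula \eqref{eq:ibp}.
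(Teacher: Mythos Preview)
Your approach is correct in spirit but takes a genuinely different route from the paper. The paper's proof is much shorter: existence of a solution in the smaller space $\WW_0$ is simply imported from Lemma~\ref{lem:stat} (the strong solution theory, which in turn cites \cite{DautrayLions6,EggerSchlottbom12}); this strong solution automatically lies in $\WW^+\oplus\VV^-$, carries the extra regularity $\widehat\phi^-\in\WW^-$, and satisfies the variational problem by the derivation in the preceding Remark. Uniqueness in the \emph{larger} space $\WW^+\oplus\VV^-$ is then obtained just from the diagonal test $\widehat\psi^\pm=\widehat\phi^\pm$, which yields $(\widehat q,\widehat\phi)_\D=(|\sn|\widehat\phi^+,\widehat\phi^+)_{\partial\D}+(\C\widehat\phi,\widehat\phi)_\D\ge\underline\sigma_a\|\widehat\phi\|_{L^2(\D)}^2$; no inf--sup is needed because existence has already been supplied externally.

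Your self-contained inf--sup argument via the shifted test function $\widehat\psi^-=\widehat\phi^-\pm\delta\,\sgrad\widehat\phi^+$ is precisely what the paper uses later, in Lemma~\ref{lem:Bprop}, to establish the discrete inf--sup condition (there the test function is $\widehat\psi_h=\widehat\phi_h+\alpha\,\sgrad\widehat\phi_h^+$; note the sign --- your choice of $-\delta$ gives the wrong sign for the $\|\sgrad\widehat\phi^+\|^2$ term). So you have essentially rediscovered the argument the authors reserve for the Galerkin level, and the Remark following the paper's proof of Lemma~\ref{lem:statweak} hints that this direct variational route indeed yields a more general existence result (cf.\ \cite{EggerSchlottbom12}). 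The trade-off: the paper's proof is two lines but leans on the semigroup/strong-solution machinery of Lemma~\ref{lem:stat}; yours is longer but intrinsic to the mixed variational framework. One slip to fix: the regularity $\sgrad\widehat\phi^-\in L^2(\D)$ comes from \eqref{eq:stat1} (the term $-(\widehat\phi^-,\sgrad\widehat\psi^+)_\D$ is the weak directional derivative of $\widehat\phi^-$), not from \eqref{eq:stat2} as you wrote.
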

\begin{proof}
Existence of a solution $\widehat \phi \in \WW_0$ follows from Lemma~\ref{lem:stat}.
To show the uniqueness also in the larger space $\WW^+ \oplus \VV^-$, 
we test \eqref{eq:stat1}--\eqref{eq:stat2} 
with $\widehat \psi^+ = \widehat \phi^+$ and $\widehat \psi^-=\widehat \phi^-$, 
which yields
\begin{align*}
(\widehat q,\widehat \phi) 
&= (\widehat q^+,\widehat \phi^+)_\D + (\widehat q^-,\widehat \phi^-)_\D \\ 
&= (|\sn| \widehat \phi^+, \widehat \phi^+)_{\Gamma_+} + (\C \widehat \phi,\widehat \phi)_\D \ge \underline \sigma_a \|\widehat \phi\|^2_{\partial\D}.
\end{align*}
Here we used the coercivity property \eqref{eq:cprop} of the collision operator. 
Uniqueness now follows from this a-priori estimate and linearity of the problem.
\end{proof}
\begin{remark}
A slight modification of the proof would allow to obtain a more general existence result under 
weaker assumptions on the parameters and the data; we refer to \cite{EggerSchlottbom12} for details. 
\end{remark}

\subsection{The instationary problem}

Let us now turn to the time-dependent radiative transfer problem.
Similar as above, one can verify that any classical solution $\phi \in C^1([0,T];\VV) \cap C^0([0,T];\WW_0)$ of the instationary problem \eqref{eq:rte1}--\eqref{eq:rte3} 
also solves the following variational problem.
\begin{problem}[Instationary problem] \label{prob:instat} $ $\\
Find $\phi = \phi^+ + \phi^- \in L^2(0,T;\WW^+ \oplus \VV^-) \cap H^1(0,T;(\WW^+)' \oplus \VV^-)$ with
 $\phi(0) = \phi_0$ and such that 
\begin{align}
\langle\partial_t \phi^+, \widehat\psi^+\rangle_\D -( \phi^-,\sgrad \widehat\psi^+)_\D + (|\sn|  \phi^+, \widehat\psi^+)_{\partial\D} + (\C  \phi^+, \widehat\psi^+ )_\D &= (q^+, \widehat\psi^+)_\D, \label{eq:instat1}\\
(\partial_t \phi^-, \widehat\psi^-)_\D + (\sgrad  \phi^+, \widehat\psi^-)_\D + (\C  \phi^-, \widehat\psi^-)_\D &=  (q^-, \widehat\psi^-)_\D, \label{eq:instat2}
\end{align} 
for all $\widehat\psi^+ \in \WW^+$ and $\widehat\psi^- \in \VV^-$ and almost every $t \in (0,T)$.
\end{problem}
\begin{remark}
Here $(\WW^+)'$ denotes the dual space of $\WW^+$ and the term $\langle\partial_t \phi^+, \psi^+\rangle_\D$ is the duality product between $(\WW^+)'$ and $\WW^+$.  
%
%
Note that by the choice of the function spaces all terms are well-defined and also the initial conditions make sense.
\end{remark}

Existence of a solution to Problem~\ref{prob:instat} for a smooth source function $q$ follows directly from Lemma~\ref{lem:instat} and uniqueness is a consequence of the following energy estimates. 

\begin{lemma}[Energy estimate] \label{lem:ee} $ $\\
Let $\phi$ be a solution of Problem~\ref{prob:instat} with $\phi_0 \in L^2(\D)$ and $q\in L^2(0,T;L^2(\D))$. 
Then 
\begin{align}  \label{eq:ee1}
\|\phi(t)\|_{L^2(\D)}^2 \le  e^{-\underline\sigma_a t} \|\phi_0\|^2_{L^2(\D)} + \frac{1}{\underline\sigma_a} \int_0^t e^{-\underline\sigma_a (t-t')}\|q(t')\|^2_{L^2(\D)} dt'
\end{align}
holds for all $0\leq t\leq T$. 
If in addition $\phi_0 \in \WW_0$ and $q\in H^1(0,T;L^2(\D))$, then
\begin{align*}
\|\partial_t \phi(t)\|_{L^2(\D)}^2 \le e^{-\underline\sigma_a t} \|q(0) - \C \phi_0 - \sgrad \phi_0\|^2_{L^2(\D)} + \frac{1}{\underline\sigma_a} \int_0^t e^{-\underline\sigma_a (t-t')}\|\partial_t q(t')\|^2_{L^2(\D)} dt'
\end{align*}
and $\|\sgrad \phi^+(t)\|_{L^2(\D)} \le C \big( \|q\|_{H^1(0,T;L^2(\D))} + \|\phi_0\|_\WW \big)$ with $C$ independent of the data.
\end{lemma}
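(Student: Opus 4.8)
The plan is to derive all three bounds from the standard energy technique, testing the weak equations with the solution itself (and, for the higher-order bounds, with its time derivative), exploiting the skew-symmetry of the off-diagonal coupling and the coercivity \eqref{eq:cprop} of the collision operator. First I would establish \eqref{eq:ee1}. Taking $\widehat\psi^+ = \phi^+(t)$ in \eqref{eq:instat1} and $\widehat\psi^- = \phi^-(t)$ in \eqref{eq:instat2} and adding, the two coupling terms $-(\phi^-,\sgrad\phi^+)_\D$ and $(\sgrad\phi^+,\phi^-)_\D$ cancel, the boundary term $(|\sn|\phi^+,\phi^+)_{\partial\D}$ is nonnegative, and $(\C\phi,\phi)_\D \ge \underline\sigma_a\|\phi\|_{L^2(\D)}^2$ by \eqref{eq:cprop}. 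Since $\langle\partial_t\phi^+,\phi^+\rangle_\D + (\partial_t\phi^-,\phi^-)_\D = \tfrac12\tfrac{d}{dt}\|\phi(t)\|_{L^2(\D)}^2$, this gives
\begin{align*}
\tfrac12 \tfrac{d}{dt}\|\phi(t)\|_{L^2(\D)}^2 + \underline\sigma_a\|\phi(t)\|_{L^2(\D)}^2 \le (q(t),\phi(t))_\D \le \underline\sigma_a\|\phi(t)\|_{L^2(\D)}^2 + \tfrac{1}{4\underline\sigma_a}\|q(t)\|_{L^2(\D)}^2,
\end{align*}
using Young's inequality. After absorbing the coercivity term one is left with $\tfrac{d}{dt}\|\phi(t)\|^2 \le \tfrac{1}{2\underline\sigma_a}\|q(t)\|^2$; however, to obtain the exponential factor in \eqref{eq:ee1} one should instead keep part of the coercivity, i.e.\ split $(q,\phi) \le \tfrac{\underline\sigma_a}{2}\|\phi\|^2 + \tfrac{1}{2\underline\sigma_a}\|q\|^2$, arriving at $\tfrac{d}{dt}\|\phi\|^2 + \underline\sigma_a\|\phi\|^2 \le \tfrac{1}{\underline\sigma_a}\|q\|^2$, and then multiply by the integrating factor $e^{\underline\sigma_a t}$ and integrate from $0$ to $t$. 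This yields \eqref{eq:ee1} directly; strictly one should first argue the validity of this testing for solutions of the stated regularity (e.g.\ by a density/regularization argument or by first treating the smoother solutions from Lemma~\ref{lem:instat}).

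Next I would bound $\partial_t\phi$. Formally differentiating \eqref{eq:instat1}--\eqref{eq:instat2} in time shows that $\dot\phi := \partial_t\phi$ solves the same variational system with source $\partial_t q$ in place of $q$ and with initial value $\dot\phi(0)$ determined by the equation at $t=0$: from \eqref{eq:rte1} we read off $\partial_t\phi(0) = q(0) - \C\phi_0 - \sgrad\phi_0$, which lies in $L^2(\D)$ because $\phi_0 \in \WW_0$. Applying the already-proven estimate \eqref{eq:ee1} to $\dot\phi$ gives exactly the claimed bound on $\|\partial_t\phi(t)\|_{L^2(\D)}^2$. To make this rigorous one works with the smoother solutions provided by Lemma~\ref{lem:instat} (which guarantees $\phi \in C^1([0,T];L^2(\D))$ for $q \in C^1$), proves the estimate there, and then passes to the limit by density of $C^1([0,T];L^2(\D))$ in $H^1(0,T;L^2(\D))$ together with the a-priori bound; the constant does not deteriorate.

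Finally, the bound on $\|\sgrad\phi^+(t)\|_{L^2(\D)}$ follows by reading \eqref{eq:instat2} as an identity for $\sgrad\phi^+$: since $\sgrad\phi^+ \in \VV^-$ by \eqref{eq:map} and $\widehat\psi^-$ ranges over all of $\VV^-$, equation \eqref{eq:instat2} shows $\sgrad\phi^+(t) = q^-(t) - \partial_t\phi^-(t) - \C\phi^-(t)$ as an identity in $\VV^-$, whence
\begin{align*}
\|\sgrad\phi^+(t)\|_{L^2(\D)} \le \|q^-(t)\|_{L^2(\D)} + \|\partial_t\phi^-(t)\|_{L^2(\D)} + \overline\sigma_t\|\phi^-(t)\|_{L^2(\D)},
\end{align*}
using the boundedness of $\C$ from \eqref{eq:cprop}. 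Each term on the right is controlled by $\|q\|_{H^1(0,T;L^2(\D))} + \|\phi_0\|_\WW$: the first via the continuous embedding $H^1(0,T;L^2(\D)) \hookrightarrow C^0([0,T];L^2(\D))$, the second via the $\partial_t\phi$ estimate just established (noting $\|q(0)\|_{L^2} \lesssim \|q\|_{H^1(0,T;L^2)}$ and $\|\C\phi_0 + \sgrad\phi_0\|_{L^2} \lesssim \|\phi_0\|_\WW$), and the third via \eqref{eq:ee1}. I expect the only genuinely delicate point to be the justification of the energy identities for solutions with the low regularity stated in Problem~\ref{prob:instat} — concretely, that $\langle\partial_t\phi^+,\phi^+\rangle_\D = \tfrac12\tfrac{d}{dt}\|\phi^+\|_{L^2(\D)}^2$ when $\phi^+ \in L^2(0,T;\WW^+)$ with $\partial_t\phi^+ \in L^2(0,T;(\WW^+)')$, and the analogous step for $\dot\phi$; this is handled by the classical Lions–Magenes lemma on the continuity and differentiation of $t \mapsto \|u(t)\|^2$ in a Gelfand triple, or, as indicated above, by reducing to the classical solutions of Lemma~\ref{lem:instat} and a density argument. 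Everything else is a routine application of Young's inequality, Grönwall's lemma in its integrating-factor form, and the two properties \eqref{eq:cprop} of $\C$.
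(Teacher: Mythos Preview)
Your argument is correct and follows essentially the same route as the paper: test with the solution, use the skew-symmetry of the coupling and coercivity of $\C$, apply Young's inequality with the split $(q,\phi)\le \tfrac{\underline\sigma_a}{2}\|\phi\|^2+\tfrac{1}{2\underline\sigma_a}\|q\|^2$ and Gronwall; then differentiate in time and repeat; the paper's ``elementary manipulations'' for the third bound are exactly your reading of \eqref{eq:instat2} as an identity for $\sgrad\phi^+$. Your added care about justifying the energy identity at low regularity (via Lions--Magenes or density from Lemma~\ref{lem:instat}) is a welcome refinement the paper leaves implicit.
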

\begin{proof}
Testing \eqref{eq:instat1}--\eqref{eq:instat2} with $\widehat\psi^+=\phi^+(t)$ and $\widehat\psi^-=\phi^-(t)$, and summing up, yields 
\begin{align*}
&\tfrac{1}{2} \tfrac{d}{dt} \|\phi(t)\|^2_{L^2(\D)} + \||\sn| \phi^+(t)\|^2_{L^2(\partial\D)} + (\C \phi(t), \phi(t))_{\D}  \\
&\qquad \qquad = (q(t), \phi(t))_{\D} \le \tfrac{\underline \sigma_a}{2} \|\phi(t)\|^2_{L^2(\D)} + \tfrac{1}{2 \underline\sigma_a}\|q(t)\|^2_{L^2(\D)},
\end{align*}
where we used Young's inequality in the last step. 
Using the coercivity of the collision operator, 
the third term in the first line can be estimated from below by $(\C \phi,\phi)_{\D} \ge \underline \sigma_a \|\phi\|^2_{L^2(\D)}$. 
The estimate \eqref{eq:ee1} then follows from the Gronwall lemma.
To show the second estimate, we first formally differentiate \eqref{eq:instat1}--\eqref{eq:instat2} with respect to time, and then test with $\widehat\psi^+ = \partial_t \phi^+(t)$ and $\widehat\psi^-=\partial_t\phi^-(t)$. The estimate for the time derivative now follows with the same arguments as the first one. 
The last estimate follows from the previous ones via elementary manipulations.
\end{proof}

Using the a-priori estimates and a density argument, we obtain 
the following well-posedness result for Problem~\ref{prob:instat}, 
which yields a slight generalization of Lemma~\ref{lem:instat}.
\begin{lemma}[Well-posedness] \label{lem:instatweak}
For any $q \in H^1(0,T;L^2(\D))$ and $\phi_0 \in \WW_0$, 
Problem \ref{prob:instat} has a unique solution $\phi\in L^2(0,T;\WW^+\oplus\VV^-)\cap H^1(0,T;\VV)$, 
for which the conclusions of Lemma~\ref{lem:ee} hold true. \\
If $q \in C^1([0,T];L^2(\D))$, then $\phi$ coincides with the classical solution of \eqref{eq:rte1}--\eqref{eq:rte3} given by Lemma~\ref{lem:instat}. 
\end{lemma}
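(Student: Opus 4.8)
The plan is to establish the three assertions --- existence/uniqueness in the weak class $L^2(0,T;\WW^+\oplus\VV^-)\cap H^1(0,T;\VV)$, validity of the energy estimates, and consistency with the classical solution --- by combining the regularity already available from Lemma~\ref{lem:instat} with a density argument and the a-priori bounds of Lemma~\ref{lem:ee}. First I would treat the case of smooth data: given $q\in C^1([0,T];L^2(\D))$ and $\phi_0\in\WW_0$, Lemma~\ref{lem:instat} supplies a classical solution $\phi\in C^1([0,T];\VV)\cap C^0([0,T];\WW_0)$. I would then verify directly that this $\phi$ solves Problem~\ref{prob:instat}: split $\phi=\phi^++\phi^-$, use the mapping properties \eqref{eq:map} and the integration-by-parts formula \eqref{eq:ibp} on the term $(\sgrad\phi^-,\widehat\psi^+)_\D$, and exploit that $\sn\phi^-=|\sn|\phi^+$ on $\Gamma_-$ together with the parity of the integrands (exactly as in the remark following Problem~\ref{prob:stat}) to recover the boundary term $(|\sn|\phi^+,\widehat\psi^+)_{\partial\D}$. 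Since $\partial_t\phi^+\in C^0([0,T];\VV)\subset C^0([0,T];(\WW^+)')$, the duality pairing reduces to the $L^2$ inner product, so \eqref{eq:instat1}--\eqref{eq:instat2} hold. The claimed regularity $\phi\in L^2(0,T;\WW^+\oplus\VV^-)\cap H^1(0,T;\VV)$ and the energy estimates of Lemma~\ref{lem:ee} then hold for this smooth solution, the latter by the testing argument already recorded in the proof of Lemma~\ref{lem:ee}.

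Next I would remove the smoothness of the data. Given $q\in H^1(0,T;L^2(\D))$ and $\phi_0\in\WW_0$, choose $q_n\in C^1([0,T];L^2(\D))$ with $q_n\to q$ in $H^1(0,T;L^2(\D))$, and let $\phi_n$ be the corresponding classical solutions. By linearity, $\phi_n-\phi_m$ solves Problem~\ref{prob:instat} with data $q_n-q_m$ and zero initial value, so the energy estimate \eqref{eq:ee1} applied to the difference, together with the time-derivative estimate, shows that $(\phi_n)$ is Cauchy in $L^2(0,T;\VV)\cap H^1(0,T;\VV)$; moreover the last bound of Lemma~\ref{lem:ee} gives a uniform bound on $\sgrad\phi_n^+$ in $L^2(0,T;\VV)$, and testing \eqref{eq:instat1} with functions in $\WW^+$ controls the boundary traces $|\sn|^{1/2}\phi_n^+$ as well, so $(\phi_n^+)$ is bounded in $L^2(0,T;\WW^+)$. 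Extracting a weakly convergent subsequence and passing to the limit in the variational identities (all terms are linear in $\phi_n$, and the limit of $\partial_t\phi_n$ identifies with $\partial_t\phi$ in the distributional sense) yields a solution $\phi$ in the asserted class, with $\phi(0)=\phi_0$ justified by the continuity of $H^1(0,T;\VV)$ functions into $\VV$ at $t=0$. The energy estimates pass to the limit by lower semicontinuity of norms.

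Uniqueness follows from \eqref{eq:ee1}: the difference of two solutions with the same data solves Problem~\ref{prob:instat} homogeneously, so $\|\phi(t)\|_{L^2(\D)}^2\le e^{-\underline\sigma_a t}\cdot 0=0$ for all $t$. Finally, when $q\in C^1([0,T];L^2(\D))$, the weak solution just constructed and the classical solution of Lemma~\ref{lem:instat} are both solutions of Problem~\ref{prob:instat} with identical data, hence coincide by this uniqueness statement. The main obstacle I anticipate is the careful justification that the limit function $\phi$ actually satisfies the \emph{boundary term} $(|\sn|\phi^+,\widehat\psi^+)_{\partial\D}$ in the weak formulation and that $\phi^+(t)\in\WW^+$ (not merely $\VV^+$) for a.e.\ $t$ --- this requires that the uniform $L^2(0,T;\WW^+)$ bound survives the limit and that the trace operator is weakly continuous on $\WW^+$, which is where the graph-norm structure of $\WW$ and the a-priori control of $\sgrad\phi_n^+$ and of the boundary traces must be combined; the remaining steps are routine given Lemmas~\ref{lem:instat} and~\ref{lem:ee}.
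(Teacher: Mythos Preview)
Your proposal is correct and follows essentially the same route as the paper's own proof: existence for $q\in C^1([0,T];L^2(\D))$ via Lemma~\ref{lem:instat}, extension to $q\in H^1(0,T;L^2(\D))$ by density using the energy estimates of Lemma~\ref{lem:ee} applied to differences, and uniqueness from the estimate~\eqref{eq:ee1}. The paper records this argument in three sentences, whereas you spell out the verification that the classical solution satisfies the weak formulation, the Cauchy-sequence mechanics of the density step, and the trace issue for $\phi^+$ in $\WW^+$; these elaborations are welcome but do not constitute a different approach.
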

\begin{proof}
For $q \in C^1(0,T;L^2(\D))$, the existence of a solution follows from Lemma~\ref{lem:instat}. 
Due to the estimate of Lemma~\ref{lem:ee} and linearity of the problem, 
we can construct a solution for $q \in H^1(0,T;L^2(\D))$ by density. 
Uniqueness and the a-priori bounds for the general case follow from linearity of the problem and 
a denisty argument.
\end{proof}

As a direct consequence of the energy estimates of Lemma~\ref{lem:ee}, 
we also obtain the exponential stability of the instationary radiative transfer problem.
\begin{lemma}[Exponential stability] \label{lem:expweak}
Let (A1)--(A2) hold and let $\widehat \phi = \widehat\phi^+ +\widehat \phi^-$ and $ \phi = \phi^++\phi^-$ denote, respectively, the solutions of Problems~\ref{prob:stat} and \ref{prob:instat} with $q=\widehat q\in L^2(\D)$ and $\phi_0 \in \WW_0$.
Then
 \begin{align*}
  \| \phi(t)-\widehat \phi\|^2_{L^2(\D)} \leq e^{-\underline\sigma_a t} \|\phi_0-\widehat \phi\|^2_{L^2(\D)}.
 \end{align*}
\end{lemma}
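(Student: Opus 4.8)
The plan is to deduce Lemma~\ref{lem:expweak} directly from the energy estimate \eqref{eq:ee1} by exploiting linearity. First I would set $e := \phi - \widehat\phi$, where $\phi$ solves Problem~\ref{prob:instat} with time-independent source $\widehat q$ and initial value $\phi_0$, and $\widehat\phi$ solves the stationary Problem~\ref{prob:stat} with the same source $\widehat q$. The key observation is that $\widehat\phi$, viewed as a constant-in-time function, is a solution of the instationary variational equations \eqref{eq:instat1}--\eqref{eq:instat2} with source $\widehat q$ and initial value $\widehat\phi$: indeed $\partial_t\widehat\phi = 0$, and the remaining terms are exactly the left-hand side of the stationary problem \eqref{eq:stat1}--\eqref{eq:stat2}, which equals $(\widehat q^\pm,\widehat\psi^\pm)_\D$. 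This uses that $\widehat\phi \in \WW^+\oplus\VV^-$ with $\widehat\phi^-\in\WW^-$ by Lemma~\ref{lem:statweak}, so all terms are well defined and the regularity required in Problem~\ref{prob:instat} holds trivially for a time-independent function.

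Next, by linearity of Problem~\ref{prob:instat} in $(q,\phi_0)$, the difference $e = \phi - \widehat\phi$ is itself a solution of Problem~\ref{prob:instat} with vanishing source $q \equiv 0$ and initial value $e(0) = \phi_0 - \widehat\phi$. Since $\phi_0 \in \WW_0$ and $\widehat\phi \in \WW_0$ (again by Lemma~\ref{lem:statweak}), we have $e(0) \in L^2(\D)$, so Lemma~\ref{lem:ee} applies. Then I would simply invoke the energy estimate \eqref{eq:ee1} for $e$: with $q\equiv 0$ the integral term drops out, giving
\begin{align*}
\|\phi(t) - \widehat\phi\|^2_{L^2(\D)} = \|e(t)\|^2_{L^2(\D)} \le e^{-\underline\sigma_a t}\|e(0)\|^2_{L^2(\D)} = e^{-\underline\sigma_a t}\|\phi_0 - \widehat\phi\|^2_{L^2(\D)},
\end{align*}
which is exactly the claimed bound.

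The main (and only real) obstacle is the first step: carefully justifying that the stationary solution $\widehat\phi$, reinterpreted as a function constant in $t$, genuinely qualifies as an admissible solution of the instationary variational Problem~\ref{prob:instat}, so that the linear structure can be used to subtract it from $\phi$. This is where one must check that $\partial_t\widehat\phi = 0$ in the dual-space sense, that $\widehat\phi$ has the membership $L^2(0,T;\WW^+\oplus\VV^-)\cap H^1(0,T;(\WW^+)'\oplus\VV^-)$ (immediate since it is constant and lies in the energy space), and that plugging it into \eqref{eq:instat1}--\eqref{eq:instat2} reproduces precisely the stationary equations \eqref{eq:stat1}--\eqref{eq:stat2}. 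Everything after that is a one-line application of Lemma~\ref{lem:ee}. One could alternatively avoid the subtraction trick and instead test the equations for $e$ directly with $\widehat\psi^\pm = e^\pm(t)$, repeating the computation in the proof of Lemma~\ref{lem:ee} verbatim with $q=0$; but the linearity argument is cleaner and reuses \eqref{eq:ee1} as a black box.
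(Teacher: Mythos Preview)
Your proposal is correct and follows essentially the same approach as the paper: define the difference $e=\phi-\widehat\phi$ (the paper uses $\widetilde\phi=\widehat\phi-\phi$, which is immaterial), observe that it solves Problem~\ref{prob:instat} with $q\equiv 0$ and initial value $\phi_0-\widehat\phi$, and invoke the energy estimate \eqref{eq:ee1} of Lemma~\ref{lem:ee}. If anything, you spell out more carefully than the paper why the stationary solution $\widehat\phi$ qualifies as a constant-in-time solution of the instationary variational problem.
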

\begin{proof}
The difference $\widetilde \phi(t) = \widehat \phi - \phi(t)$ satisfies  Problem~\ref{prob:instat} with right hand side $q \equiv 0$ and initial value $\widetilde \phi(0)=\widehat \phi - \phi_0$. 
The assertion then follows readily from the energy estimate \eqref{eq:ee1} of Lemma~\ref{lem:ee}.
\end{proof}

\subsection{Operator formulations} \label{sec:operator}

To highlight the structure of the time-dependent radiative transfer problem, 
let us shortly discuss an equivalent operator formulation. 
Let the collision operator $\C$ be given as above and define
the transport operator $\A : \WW^+ \to \VV^-$ and $\R : \WW^+ \to (\WW^+)'$ by
\begin{align*}
\A \phi^+ = \sgrad \phi^+
\quad \text{and} \quad 
\langle \R \phi^+, \psi^+ \rangle_\D= (|\sn|\phi^+, \psi^+)_{\partial\D}.
\end{align*}
The adjoint $\A':\VV^-\to (\WW^+)'$ of $\A$ is defined by $\langle \A' \phi^-, \psi^+\rangle_\D := (\phi^-,\A \psi^+ )_\D$. 
In the definition of the two operators $\R$ and $\A'$, the left hand side has 
to be understood as duality product between the spaces $(\WW^+)'$ and $\WW^+$.
As already mentioned in the introduction, Problem~\ref{prob:instat} can then be written 
as an abstract evolution equation
\begin{align*}
\partial_t \begin{pmatrix} \phi^+ \\ \phi^-\end{pmatrix} 
+ \begin{pmatrix} \C + \R & -\A' \\ \A & \C \end{pmatrix} \begin{pmatrix} \phi^+ \\ \phi^-\end{pmatrix} 
= 
\begin{pmatrix} q^+ \\ q^- \end{pmatrix} \quad \text{for} \quad t>0,
\end{align*}
with initial conditions $(\phi^+(0),\phi^-(0))=(\phi_0^+,\phi^-_0)$.
Note that the collision operator $\C$ has a slightly different meaning in the two lines.
The equality in the above system has to be understood in the sense of $(\WW^+)' \times \VV^-$.
A corresponding formulation for the stationary problem \cite{EggerSchlottbom12} can be obtained by omitting the time derivatives. 
The operator governing the  evolution problem can be split into two parts
\begin{align*}
\begin{pmatrix} 0 & -\A' \\ \A & 0 \end{pmatrix}
\quad \text{and} \quad 
\begin{pmatrix} \C + \R & 0 \\ 0 & \C \end{pmatrix}
\end{align*} 
which are skew-adjoint and dissipative, respectively. 
This basic structure of the radiative transfer problem
will be a main guideline for the design and analysis of numerical methods below.

\begin{remark} \label{rem:SH}
Identifying $\WW^+ \times \VV^-$ with $\WW^+ \oplus \VV^-$ 
allows us to rewrite the system equivalently as 
\begin{align*}
\partial_t \phi(t) + (\S+\H) \phi(t) &= q(t), \quad t>0, 
\end{align*}
with initial condition $\phi(0)=\phi_0$. 
This equation has to be understood in the sense of $(\WW^+ \otimes \VV^-)'$
and the two operators $\S$ and $\H$ are defined by 
\begin{align*}
\langle \S \phi,\psi\rangle_\D &= (\sgrad \phi^+,\psi^-)_\D - (\phi^-, \sgrad \psi^+)_\D, 
\qquad \text{and} \\
\langle \H \phi,\psi \rangle_\D &= (\C \phi, \psi)_\D + (|\sn| \phi^+,\psi^+)_{\partial\D}.
\end{align*}
Note that $\S$ and $\H$ are again skew-adjoint and dissipative, respectively.
Let us emphasize that, in contrast to the original form \eqref{eq:rte1}--\eqref{eq:rte3} of the radiative transfer problem, the directional derivative operator is understood in a distributional sense here. 
Moreover, the boundary conditions are incorporated naturally in the definition of the operators.
\end{remark}

\section{Galerkin discretization of the stationary problem}\label{sec:stath} \setcounter{equation}{0}

We start by recalling the basic discretization strategy for the stationary problem proposed in \cite{EggerSchlottbom12}.
Let $\WW_h^+ \subset \WW^+$ and $\VV^-_h \subset \VV^-$ denote some finite dimensional subspaces. 
As approximation for Problem~\ref{prob:stat}, we then consider the following Galerkin method.
\begin{problem}[Galerkin discretization] \label{prob:stath}
Find $\widehat \phi_h \in \WW_h^+ \oplus \VV_h^-$ such that 
\begin{align}
 (|\sn| \widehat \phi_h^+, \widehat\psi_h^+)_{\partial\D} - ( \widehat\phi_h^-,\sgrad \widehat\psi_h^+)_\D  + (\C \widehat \phi_h^+, \widehat\psi_h^+ )_\D &= (\widehat q^+, \widehat\psi_h^+)_\D,  \label{eq:stath1}\\
 (\sgrad  \widehat\phi_h^+, \widehat\psi_h^-)_\D + (\C  \widehat \phi_h^-, \widehat\psi_h^-)_\D &=  (\widehat q^-, \widehat\psi_h^-)_\D, \label{eq:stath2}
\end{align} 
for all test functions $\widehat\psi_h^+ \in \WW^+$ and $\widehat\psi_h^- \in \WW^-$.
\end{problem}

\begin{remark} \label{rem:B}
\noindent
For our analysis it will be convenient to rewrite the discrete stationary Problem~\ref{prob:stath} 
in the following more compact form:
Find $\widehat \phi_h \in \WW_h^+ \oplus \VV_h^-$ such that
\begin{align} \label{eq:statop}
B(\widehat \phi_h; \widehat \psi_h) &= l(\widehat \psi_h) \qquad \text{for all } \widehat\psi_h \in \WW_h^+ \oplus \VV_h^-
\end{align}
with bilinear form 
\begin{align*}
B(\widehat \phi; \widehat \psi) 
&= (|\sn| \widehat \phi^+, \widehat\psi^+)_{\partial\D} + (\C \widehat \phi, \widehat\psi )_\D  - ( \widehat\phi^-,\sgrad \widehat\psi^+)_\D
+ (\sgrad  \widehat\phi^+, \widehat\psi^-)_\D 
\end{align*}
and linear form defined by 
\begin{align*}
l(\widehat \psi) = (\widehat q,\widehat\psi)_\D.
\end{align*}
Note that $B$ is just the bilinear form associated with the operator $\S+\H$ defined in Remark~\ref{rem:SH}.
\end{remark}
%
In order to guarantee existence and uniqueness of a solution and uniform a-priori estimates, 
some compatibility condition for the discretization spaces $\WW_h^+$ and $\VV_h^-$ is required. 
We will therefore assume in the following that
\begin{enumerate} \itemsep0.3em
 \item[(A3)] $\WW_h^+ \subset \WW^+$ and $\VV_h^- \subset \VV^-$ are finite dimensional;
 \item[(A4)] $\sgrad \WW_h^+ \subset \VV_h^-$. 
\end{enumerate}
Assumption (A3) is natural for conforming Galerkin approximation and the compatibility condition (A4)
can easily be realized by explicit construction. One possibility is to choose $\WW_h^- := \sgrad \WW_h^+$
and another choice will be discussed in Section~\ref{sec:pnfem} below. 

\begin{lemma}  \label{lem:Bprop}
Let (A1)--(A4) hold.
Then the bilinear form $B(\cdot;\cdot)$ satisfies:
\begin{enumerate} \itemsep0.3em
\item[(i)] $B(\widehat \phi; \widehat \psi) \le C_B \|\widehat \phi\|_{\WW^+ \oplus \VV^-} \|\widehat \psi\|_{\WW^+ \oplus \VV^-}$ for all $\widehat \phi, \widehat \psi^+ \in \WW^+ \oplus \VV^-$ with $C_B=\overline\sigma_t$;
\item[(ii)] $\sup_{\widehat \psi_h \in \WW_h^+ \oplus \VV_h^-} B(\widehat \phi_h; \widehat \psi_h)/\|\widehat \psi_h\|_{\WW^+ \oplus \VV^-} \ge \beta \|\widehat \phi_h\|_{\WW^+ \oplus \VV^-}$ for all $\widehat \phi_h\in \WW_h^+ \oplus \VV_h^-$ with stability constant $\beta=\underline\sigma_a/(1+\overline\sigma_t^2)$;
\item[(iii)] $\sup_{\widehat \phi_h\in \WW_h^+ \oplus \VV_h^-} B(\widehat \phi_h; \widehat \psi_h) > 0$ for all $\widehat \psi_h \in \WW_h^+ \oplus \VV_h^-$ with $\widehat \psi_h \ne 0$.
\end{enumerate}
\end{lemma}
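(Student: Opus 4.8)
The plan is to verify the three properties essentially by direct computation, using the coercivity and boundedness of $\C$ from \eqref{eq:cprop} together with assumption (A4).

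For part (i), I would simply estimate each of the four terms in $B(\widehat\phi;\widehat\psi)$ by Cauchy–Schwarz. The boundary term is bounded by $\||\sn|^{1/2}\widehat\phi^+\|_{L^2(\partial\D)}\||\sn|^{1/2}\widehat\psi^+\|_{L^2(\partial\D)}$, the collision term by $\overline\sigma_t\|\widehat\phi\|_{L^2(\D)}\|\widehat\psi\|_{L^2(\D)}$ using \eqref{eq:cprop}, and the two transport terms by $\|\widehat\phi^-\|_{L^2(\D)}\|\sgrad\widehat\psi^+\|_{L^2(\D)}$ and $\|\sgrad\widehat\phi^+\|_{L^2(\D)}\|\widehat\psi^-\|_{L^2(\D)}$, respectively. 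Summing up and bounding everything by the $\WW^+\oplus\VV^-$-norms gives the claim; tracking the constants carefully yields $C_B=\overline\sigma_t$ (the boundary and transport contributions being absorbed because they already appear in the $\WW^+$-norm and because one can combine them with the $L^2$ terms so that the dominating constant is $\overline\sigma_t$).

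For part (ii), the key observation is that the skew-symmetric transport part contributes nothing on the diagonal: testing with $\widehat\psi=\widehat\phi$ gives $B(\widehat\phi;\widehat\phi) = (|\sn|\widehat\phi^+,\widehat\phi^+)_{\partial\D}+(\C\widehat\phi,\widehat\phi)_\D \ge \underline\sigma_a\|\widehat\phi\|_{L^2(\D)}^2$. This controls the $L^2$-part of the norm but not $\|\sgrad\widehat\phi_h^+\|_{L^2(\D)}$. To recover that, I would use assumption (A4): since $\sgrad\widehat\phi_h^+\in\VV_h^-$, it is an admissible test function, and choosing $\widehat\psi_h = (0,\sgrad\widehat\phi_h^+)$ gives $B(\widehat\phi_h;\widehat\psi_h) = \|\sgrad\widehat\phi_h^+\|_{L^2(\D)}^2 + (\C\widehat\phi_h^-,\sgrad\widehat\phi_h^+)_\D \ge \|\sgrad\widehat\phi_h^+\|_{L^2(\D)}^2 - \overline\sigma_t\|\widehat\phi_h^-\|_{L^2(\D)}\|\sgrad\widehat\phi_h^+\|_{L^2(\D)}$. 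Taking an appropriate linear combination $\widehat\psi_h = \widehat\phi_h + \delta(0,\sgrad\widehat\phi_h^+)$ with a small parameter $\delta>0$ and using Young's inequality, one obtains a lower bound of the form $c(\|\widehat\phi_h\|_{L^2(\D)}^2 + \|\sgrad\widehat\phi_h^+\|_{L^2(\D)}^2 + \||\sn|^{1/2}\widehat\phi_h^+\|_{L^2(\partial\D)}^2) = c\|\widehat\phi_h\|_{\WW^+\oplus\VV^-}^2$, while $\|\widehat\psi_h\|_{\WW^+\oplus\VV^-}\le C\|\widehat\phi_h\|_{\WW^+\oplus\VV^-}$; dividing gives the inf-sup bound. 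Optimizing $\delta$ produces the stated constant $\beta=\underline\sigma_a/(1+\overline\sigma_t^2)$.

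For part (iii), since $\WW_h^+\oplus\VV_h^-$ is finite-dimensional, the discrete inf-sup condition (ii) already implies that the associated matrix is injective, hence square and surjective, hence its transpose is also injective; equivalently, for every $\widehat\psi_h\ne0$ there is some $\widehat\phi_h$ with $B(\widehat\phi_h;\widehat\psi_h)\ne0$, and after possibly changing sign this is positive. I would phrase this as: (ii) shows the linear map $\widehat\phi_h\mapsto B(\widehat\phi_h;\cdot)$ from $\WW_h^+\oplus\VV_h^-$ into its dual is injective, and a dimension count gives bijectivity, which is exactly (iii).

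The main obstacle is part (ii): getting control of $\|\sgrad\widehat\phi_h^+\|_{L^2(\D)}$ is not automatic and genuinely needs the compatibility assumption (A4), and one has to be careful in balancing the parameter $\delta$ against the constant $\overline\sigma_t$ so that the advertised value of $\beta$ comes out. Parts (i) and (iii) are routine by comparison.
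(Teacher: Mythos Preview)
Your proposal is correct and follows essentially the same approach as the paper for parts (i) and (ii): Cauchy--Schwarz for continuity, and the test function $\widehat\psi_h=\widehat\phi_h+\alpha\,\sgrad\widehat\phi_h^+$ (admissible by (A4)) for the inf--sup condition, with the paper specifying $\alpha=2\underline\sigma_a/(1+\overline\sigma_t^2)$ as the optimized parameter.

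The only genuine difference is in (iii). The paper indicates that (iii) is obtained ``in a similar manner as (ii)'', i.e.\ by an explicit construction of a test function $\widehat\phi_h$ depending on $\widehat\psi_h$ (mirroring the argument for (ii) with the roles reversed). You instead deduce (iii) from (ii) by a finite-dimensional linear algebra argument: the inf--sup condition makes the induced map injective, hence bijective by a dimension count, which forces surjectivity of the dual problem. Your route is shorter and perfectly valid given assumption (A3); the paper's route has the minor advantage that it would extend verbatim to the infinite-dimensional setting (e.g.\ the pure $P_N$ approximation mentioned in Section~\ref{sec:pnfem}) without invoking finite-dimensionality.
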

\begin{proof}
Continuity (i) of the bilinear form $B$ follows  from the Cauchy-Schwarz inequality 
and the definition of the norms. 
The stability condition (ii) can be proven by choosing the test function $\widehat \psi_h = \widehat \phi_h + \alpha \sgrad \widehat \phi_h^+$ with $\alpha=2 \underline\sigma_a/(1+\overline\sigma_t^2)$.  Due to assumption (A4), such a choice is possible. 
Condition (iii) is shown in a similar manner as (ii).
Let us refer to \cite[Pro~3.3]{EggerSchlottbom12} for the proof of a slightly more general result.
\end{proof}

Based on the assertions of the previous lemma, we now readily obtain 

\begin{lemma}[Well-posedness and quasi-optimality] $ $ \label{lem:stath}\\
Let (A1)--(A4) hold and $\widehat q\in L^2(\D)$.
Then Problem~\ref{prob:stath} admits a unique solution $\widehat \phi_h$ and 
\begin{align*}
\|\widehat \phi - \widehat \phi_h\|_{\WW^+ \oplus \VV^-} \le C \inf_{\widehat \psi_h \in \WW_h^+ \oplus \VV_h^-} \|\widehat \phi - \widehat \psi_h\|_{\WW^+ \oplus \VV^-}
\end{align*}
with $C=C_B / \beta$ and constants $C_B$ and $\beta$ as in the previous lemma.
\end{lemma}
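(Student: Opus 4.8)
The plan is to obtain this as a direct consequence of the generalized Lax--Milgram (Babu\v{s}ka--Ne\v{c}as) theorem applied to the discrete bilinear form $B(\cdot;\cdot)$ on the finite dimensional Hilbert space $X_h := \WW_h^+ \oplus \VV_h^-$, equipped with the norm $\|\cdot\|_{\WW^+\oplus\VV^-}$. Lemma~\ref{lem:Bprop} provides exactly the three hypotheses needed: part (i) gives boundedness of $B$ (with constant $C_B$), part (ii) gives the discrete inf--sup condition (with constant $\beta$), and part (iii) rules out a nontrivial element annihilating all test functions; since $X_h$ is finite dimensional, (ii) and (iii) together yield both injectivity and surjectivity of the associated linear operator $X_h \to X_h'$, hence existence and uniqueness of $\widehat\phi_h$ solving \eqref{eq:statop}, together with the stability bound $\|\widehat\phi_h\|_{\WW^+\oplus\VV^-} \le \beta^{-1}\|l\|_{X_h'} \le \beta^{-1}\|\widehat q\|_{L^2(\D)}$.

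For the quasi-optimality estimate, the standard Galerkin-orthogonality argument applies. Since $\widehat\phi \in \WW^+\oplus\VV^-$ solves the continuous Problem~\ref{prob:stat} (equivalently $B(\widehat\phi;\widehat\psi)=l(\widehat\psi)$ for all $\widehat\psi\in\WW^+\oplus\VV^-$, which one checks holds by Lemma~\ref{lem:statweak} since the terms in $B$ match \eqref{eq:stat1}--\eqref{eq:stat2} after integration by parts) and $\widehat\phi_h$ solves the discrete problem, subtracting gives $B(\widehat\phi - \widehat\phi_h; \widehat\psi_h) = 0$ for all $\widehat\psi_h \in X_h$. Then for an arbitrary $\widehat\psi_h \in X_h$, write $\widehat\phi_h - \widehat\psi_h \in X_h$ and use the discrete inf--sup bound (ii):
\begin{align*}
\beta \|\widehat\phi_h - \widehat\psi_h\|_{\WW^+\oplus\VV^-}
&\le \sup_{\widehat\chi_h \in X_h} \frac{B(\widehat\phi_h - \widehat\psi_h; \widehat\chi_h)}{\|\widehat\chi_h\|_{\WW^+\oplus\VV^-}}
= \sup_{\widehat\chi_h \in X_h} \frac{B(\widehat\phi - \widehat\psi_h; \widehat\chi_h)}{\|\widehat\chi_h\|_{\WW^+\oplus\VV^-}}
\le C_B \|\widehat\phi - \widehat\psi_h\|_{\WW^+\oplus\VV^-},
\end{align*}
where the middle equality uses Galerkin orthogonality and the last step uses boundedness (i). A triangle inequality $\|\widehat\phi - \widehat\phi_h\| \le \|\widehat\phi - \widehat\psi_h\| + \|\widehat\psi_h - \widehat\phi_h\|$ then gives $\|\widehat\phi - \widehat\phi_h\|_{\WW^+\oplus\VV^-} \le (1 + C_B/\beta)\|\widehat\phi - \widehat\psi_h\|_{\WW^+\oplus\VV^-}$, and taking the infimum over $\widehat\psi_h \in X_h$ yields the claim (with the constant $C$ slightly larger than $C_B/\beta$; if one wants precisely $C = C_B/\beta$ this is the sharper version of C\'ea's lemma proved via an orthogonal projection argument, but the cruder constant suffices).

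I do not expect any genuine obstacle here: the lemma is essentially a bookkeeping corollary of Lemma~\ref{lem:Bprop} plus the abstract Banach--Ne\v{c}as--Babu\v{s}ka framework, and the only mild subtlety is confirming that the continuous solution $\widehat\phi$ from Lemma~\ref{lem:statweak} also satisfies the compact variational identity $B(\widehat\phi;\widehat\psi)=l(\widehat\psi)$ for \emph{all} test functions in $\WW^+\oplus\VV^-$ (not just discrete ones), so that Galerkin orthogonality is available — but this is immediate from Remark~\ref{rem:B}, where $B$ is identified with the bilinear form of $\S+\H$ and the conformity $\WW_h^+\oplus\VV_h^-\subset\WW^+\oplus\VV^-$ from (A3). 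The whole argument is a few lines; I would simply cite the relevant abstract theorem (e.g. from Ern--Guermond or the reference \cite{EggerSchlottbom12}) rather than reprove it.
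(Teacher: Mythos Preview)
Your approach is correct and matches the paper's own proof, which simply cites the Babu\v{s}ka--Aziz lemma for well-posedness and quasi-optimality and refers to \cite{XuZikatanov03} for the sharp constant $C=C_B/\beta$. Your remark that the naive triangle-inequality argument only gives $1+C_B/\beta$ and that the sharper constant requires the Xu--Zikatanov projection argument is exactly the point behind that citation.
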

\begin{proof}
The assertions follow from the Babuska-Aziz lemma \cite{Babuska71,BabuskaAziz72} and the precise form of the stability constant was derived in \cite{XuZikatanov03}. 
\end{proof}

\begin{remark}
Since $B(\widehat \phi;\widehat  \phi) \ge (\C \widehat \phi, \widehat \phi)_\D \ge \underline\sigma_a \|\widehat \phi\|_{L^2(\D)}^2$, 
the discrete problem \eqref{eq:statop} is uniquely solvable for any choice of finite dimensional subspaces 
$\WW_h^+ \subset \WW^+$ and $\VV_h^- \subset \VV^-$. 
The compatibility condition (A4) is therefore only needed to obtain the uniform a-priori estimate for the solution in the stronger norm.
\end{remark}

\begin{remark}
The solution $\widehat\phi_h \in \WW_h^+ \oplus \VV_h^-$ of Problem~\ref{prob:stath}
depends linearly and continuously on the solution $\widehat \phi \in \WW^+ \oplus \VV^-$ of Problem~\ref{prob:stat}. 
Using standard terminology, the operator $\Pi_h : \widehat \phi \mapsto \widehat \phi_h$ defined by 
\begin{align} \label{eq:elliptic_projection}
B(\widehat \Pi_h \widehat \phi; \widehat \psi_h) = B(\widehat \phi; \widehat \psi_h) \qquad \text{for all } \widehat\psi_h \in \WW_h^+ \oplus \VV_h^-
\end{align}
will be called \emph{elliptic projection}. 
By the estimates of Lemma~\ref{lem:stath}, we have 
\begin{align} \label{eq:ellipticest}
\|\widehat \Pi_h \widehat \phi - \widehat \phi\|_{\WW^+ \oplus \VV^-} \le C \inf_{\widehat \psi_h \in \WW_h^+ \oplus \VV_h^-} \|\widehat\phi - \widehat\psi_h\|_{\WW^+ \oplus \VV^-}.
\end{align}
This quasi-optimal approximation property plays an important role in the numerical analysis of time-dependent problems; see \cite{DouglasDupontWheeler78,Dupont73} for similar arguments in the context of more standard problems.
\end{remark}

\section{Semi-discretization of the instationary problem} \label{sec:semi} \setcounter{equation}{0}

Using the discretization strategy for the stationary problem, one directly  
obtains a corresponding Galerkin semi-discretization for Problem~\ref{prob:instat}, which reads
\begin{problem}[Galerkin semi-discretization] \label{prob:instath} $ $\\
Find $\phi_h \in H^1(0,T;\WW_h^+ \oplus \VV_h^-)$ with initial value
$\phi_h(0) = \Pi_h \phi_0$ and such that
\begin{align*}
(\partial_t \phi_h^+, \widehat\psi_h^+)_\D -( \phi_h^-,\sgrad \widehat\psi_h^+)_\D + (|\sn|  \phi_h^+, \widehat\psi_h^+)_{\partial\D} + (\C  \phi_h^+, \widehat\psi_h^+ )_\D &= (q^+, \widehat\psi_h^+)_\D, \\
(\partial_t \phi_h^-, \widehat\psi_h^-)_\D + (\sgrad  \phi_h^+, \widehat\psi_h^-)_\D + (\C  \phi_h^-, \widehat\psi_h^-)_\D &=  (q^-, \widehat\psi_h^-)_\D, 
\end{align*} 
holds for all $\widehat\psi_h^+ \in \WW_h^+$ and $\widehat\psi_h^- \in \WW_h^-$ and for almost every $t \in (0,T)$.
\end{problem}
The hat symbol is used again to emphasize that the test functions are independent of time.
Since $\WW_h^+$ and $\VV_h^-$ are finite dimensional, this problem amounts to a linear system of ordinary differential equations, and one directly obtains

\begin{lemma} \label{lem:instath}
Let $q \in L^2(0,T;L^2(\D))$ and $\phi_0 \in \WW^+ \oplus \VV^-$ and assume that (A1)--(A3) hold. 
Then Problem~\ref{prob:instath} admits a unique solution $\phi_h \in H^1(0,T;\WW_h^+ \oplus \VV_h^-)$ and
\begin{align} \label{eq:ee1h}
\|\phi_h(t)\|^2_{L^2(\D)} \le e^{-\underline \sigma_a t} \|\phi_0\|_{L^2(\D)}^2 + \frac{1}{\underline{\sigma}_a}\int_0^t e^{-\underline \sigma_a (t-s)} \|q(t')\|_{L^2(\D)}^2 dt'.
\end{align}
\end{lemma}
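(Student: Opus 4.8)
The plan is to follow the strategy used on the continuous level in Lemma~\ref{lem:ee}, but now working inside the finite-dimensional spaces $\WW_h^+ \oplus \VV_h^-$, where existence is immediate from ODE theory. First I would observe that, after choosing bases for $\WW_h^+$ and $\VV_h^-$, the semi-discrete system in Problem~\ref{prob:instath} is equivalent to a linear system $M \dot{u}(t) + A u(t) = b(t)$ for the coefficient vector $u$, where the mass matrix $M$ is symmetric positive definite (being a Gram matrix of basis functions in $L^2(\D)$) and $A$, $b$ are determined by the bilinear and linear forms. Hence $\dot u = M^{-1}(b - Au)$ is a linear ODE with $L^2$-in-time right-hand side, so the Carath\'eodory existence theorem yields a unique solution $u \in H^1(0,T)$, i.e. $\phi_h \in H^1(0,T;\WW_h^+ \oplus \VV_h^-)$. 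Note that only (A3) is needed here; (A4) played no role since we are not asking for the stronger $\WW$-norm bound.

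Next I would derive the energy estimate \eqref{eq:ee1h} exactly as in the proof of Lemma~\ref{lem:ee}. Since $\phi_h(t) \in \WW_h^+ \oplus \VV_h^-$ for almost every $t$, it is an admissible choice of test function: take $\widehat\psi_h^+ = \phi_h^+(t)$ and $\widehat\psi_h^- = \phi_h^-(t)$ in the two equations of Problem~\ref{prob:instath} and add them. The crucial point is that the coupling terms cancel:
\begin{align*}
-(\phi_h^-, \sgrad \phi_h^+)_\D + (\sgrad \phi_h^+, \phi_h^-)_\D = 0,
\end{align*}
which is where the skew-adjoint structure of $\S$ (cf. Remark~\ref{rem:SH}) is used, and no integration by parts or boundary regularity is required because both test functions are taken from the discrete spaces themselves. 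This leaves
\begin{align*}
\tfrac{1}{2}\tfrac{d}{dt}\|\phi_h(t)\|_{L^2(\D)}^2 + \||\sn|^{1/2}\phi_h^+(t)\|_{L^2(\partial\D)}^2 + (\C\phi_h(t),\phi_h(t))_\D = (q(t),\phi_h(t))_\D.
\end{align*}
Dropping the nonnegative boundary term, using coercivity $(\C\phi_h,\phi_h)_\D \ge \underline\sigma_a\|\phi_h\|_{L^2(\D)}^2$ from \eqref{eq:cprop}, and bounding the right-hand side by Young's inequality $(q,\phi_h)_\D \le \tfrac{\underline\sigma_a}{2}\|\phi_h\|_{L^2(\D)}^2 + \tfrac{1}{2\underline\sigma_a}\|q\|_{L^2(\D)}^2$ gives the differential inequality
\begin{align*}
\tfrac{d}{dt}\|\phi_h(t)\|_{L^2(\D)}^2 + \underline\sigma_a \|\phi_h(t)\|_{L^2(\D)}^2 \le \tfrac{1}{\underline\sigma_a}\|q(t)\|_{L^2(\D)}^2.
\end{align*}
Finally I would integrate this after multiplying by the integrating factor $e^{\underline\sigma_a t}$ (equivalently, apply the Gronwall lemma), which produces precisely \eqref{eq:ee1h}; the bound on $\|\phi_h(0)\|_{L^2(\D)}$ needs $\|\Pi_h\phi_0\|_{L^2(\D)} \le \|\phi_0\|_{L^2(\D)}$, but in fact the statement is written directly in terms of $\|\phi_0\|_{L^2(\D)}$ via this stability of the elliptic projection in $L^2$, or one simply reads $\phi_0$ in \eqref{eq:ee1h} as the actual discrete initial value and invokes continuity of $\Pi_h$.

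I do not expect any genuine obstacle here: the argument is a direct transcription of the continuous-level proof, and the only thing one must be slightly careful about is that the test functions must lie in the \emph{discrete} spaces (which they do, since we test with $\phi_h(t)$ itself), so that the coupling terms cancel algebraically rather than via integration by parts. A secondary minor point is the precise meaning of $\|\phi_0\|_{L^2(\D)}$ versus $\|\Pi_h\phi_0\|_{L^2(\D)}$ in the stated bound, which is resolved by noting the elliptic projection — or, if one prefers, any $L^2$-stable projection used to define $\phi_h(0)$ — does not increase the $L^2$-norm, or by absorbing the harmless constant.
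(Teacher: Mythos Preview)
Your proposal is correct and follows essentially the same approach as the paper, which simply invokes the Picard--Lindel\"of theorem for existence and uniqueness and then refers back to the argument of Lemma~\ref{lem:ee} for the energy estimate. Your use of Carath\'eodory's theorem is in fact more appropriate than Picard--Lindel\"of given only $q \in L^2(0,T;L^2(\D))$, and your observation about $\|\Pi_h\phi_0\|_{L^2(\D)}$ versus $\|\phi_0\|_{L^2(\D)}$ is a genuine subtlety that the paper glosses over.
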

\begin{proof}
Existence and uniqueness follow from the Picard-Lindel\"of theorem and the energy estimate is obtained with the same arguments as that of Lemma~\ref{lem:ee}.
\end{proof}

\begin{remark} 
In comparison with the continuous problem, we could slightly relax the conditions on the regularity of the data here.
Moreover, the assumption (A4), which was required for the well-posedness of the discrete stationary problem, could be dropped here.  
\end{remark}
As a direct consequence of the discrete energy estimate \eqref{eq:ee1h}, 
one can also guarantee exponential convergence to the discrete equilibrium similarly as on the continuous level.

\begin{lemma}[Exponential stability] \label{lem:eeh} $ $\\
Let the assumptions (A1)--(A4) hold. Moreover, let $\widehat \phi_h \in \WW_h^+ \oplus \VV_h^-$ and $\phi_h \in H^1(0,T;\WW_h^+ \oplus \VV_h^-)$ 
denote the solutions of Problem~\ref{prob:stath} and Problem~\ref{prob:instath} with $q=\widehat q \in L^2(\D)$ and  $\phi_0 \in \WW^+\oplus\WW^-$, respectively. 
Then 
\begin{align*}
\|\phi_h(t) - \widehat \phi_h\|^2_{L^2(\D)} \le  e^{-\underline{\sigma}_a t} \|\phi_h(0) - \widehat \phi_h\|^2_{L^2(\D)}.
\end{align*}
\end{lemma}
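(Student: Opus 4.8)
The plan is to mimic the proof of Lemma~\ref{lem:expweak} on the discrete level, using the discrete energy estimate \eqref{eq:ee1h} with vanishing source. First I would observe that the time-independent function $\widehat\phi_h \in \WW_h^+ \oplus \VV_h^-$ solves Problem~\ref{prob:stath}, hence by definition it satisfies \eqref{eq:stath1}--\eqref{eq:stath2} for all discrete test functions; since $\partial_t \widehat\phi_h = 0$, it also trivially satisfies the two equations of Problem~\ref{prob:instath} with right hand side $q = \widehat q$. Consequently the difference $\widetilde\phi_h(t) := \phi_h(t) - \widehat\phi_h$ lies in $H^1(0,T;\WW_h^+ \oplus \VV_h^-)$ and, by linearity, solves Problem~\ref{prob:instath} with right hand side $q \equiv 0$ and initial value $\widetilde\phi_h(0) = \phi_h(0) - \widehat\phi_h$.

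Next I would apply the energy estimate \eqref{eq:ee1h} from Lemma~\ref{lem:instath} to $\widetilde\phi_h$. With $q \equiv 0$ the integral term on the right hand side vanishes, leaving exactly
\begin{align*}
\|\widetilde\phi_h(t)\|_{L^2(\D)}^2 \le e^{-\underline\sigma_a t}\,\|\widetilde\phi_h(0)\|_{L^2(\D)}^2,
\end{align*}
which is the claimed bound once we substitute back $\widetilde\phi_h = \phi_h - \widehat\phi_h$. Strictly speaking one should also check that the hypotheses of Lemma~\ref{lem:instath} are met for the homogeneous problem, but that is immediate: $q \equiv 0 \in L^2(0,T;L^2(\D))$ and $\widetilde\phi_h(0) \in \WW_h^+ \oplus \VV_h^-$.

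This argument is essentially a reduction, so there is no serious obstacle; the only points that require a moment of care are (a) that $\widehat\phi_h$ genuinely satisfies the semi-discrete equations — which relies on nothing more than the fact that Problem~\ref{prob:stath} uses the same bilinear form, tested over the same discrete space, as the spatial part of Problem~\ref{prob:instath} — and (b) that subtracting the two problems is legitimate, which holds by linearity of all the operators $\A$, $\A'$, $\R$, $\C$ involved. I would therefore present the proof in three short sentences: identify $\widetilde\phi_h$ as the solution of the homogeneous semi-discrete problem, invoke \eqref{eq:ee1h}, and conclude.

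\begin{proof}
The time-independent function $\widehat\phi_h$ solves Problem~\ref{prob:stath}, and since $\partial_t\widehat\phi_h = 0$, it also satisfies the equations of Problem~\ref{prob:instath} with $q = \widehat q$. By linearity, the difference $\widetilde\phi_h(t) = \phi_h(t) - \widehat\phi_h$ lies in $H^1(0,T;\WW_h^+\oplus\VV_h^-)$ and solves Problem~\ref{prob:instath} with right hand side $q \equiv 0$ and initial value $\widetilde\phi_h(0) = \phi_h(0) - \widehat\phi_h$. Applying the energy estimate \eqref{eq:ee1h} of Lemma~\ref{lem:instath} with vanishing source then yields
\begin{align*}
\|\phi_h(t) - \widehat\phi_h\|_{L^2(\D)}^2 = \|\widetilde\phi_h(t)\|_{L^2(\D)}^2 \le e^{-\underline\sigma_a t}\,\|\widetilde\phi_h(0)\|_{L^2(\D)}^2 = e^{-\underline\sigma_a t}\,\|\phi_h(0) - \widehat\phi_h\|_{L^2(\D)}^2,
\end{align*}
which is the asserted bound.
\end{proof}
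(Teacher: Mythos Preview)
Your proof is correct and follows exactly the approach the paper indicates: the lemma is stated as a direct consequence of the discrete energy estimate \eqref{eq:ee1h}, obtained in the same manner as Lemma~\ref{lem:expweak} on the continuous level, and that is precisely what you carry out. The only cosmetic point is that \eqref{eq:ee1h} as written has $\|\phi_0\|_{L^2(\D)}^2$ rather than $\|\phi_h(0)\|_{L^2(\D)}^2$ on the right, but the underlying energy argument (testing with the discrete solution) of course yields the bound in terms of the discrete initial value, which is what you use.
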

%
The previous results show that the proposed Galerkin semi-discretization of the time-dependent radiative transfer problem 
inherits all good stability properties from the continuous level. 

\bigskip 

The error analysis for the Galerkin semi-discretization can now be conducted with standard arguments. 
For convenience of the reader, we summarize the most basic result and provide a short proof.

\begin{theorem}[Error estimate for the semi-discretization] \label{thm:res1} $ $\\
Let (A1)--(A4) hold and let $\phi$ and $\phi_h$ denote the solutions of Problems~\ref{prob:instat} and \ref{prob:instath}, respectively. 
Then
\begin{align*} 
\|\phi(t) - \phi_h(t)\|_{L^2(\D)}
\le C \big( \inf_{\widehat\psi_h} \|\phi_0 - \widehat\psi_h\|_{\WW^+ \oplus \VV^-}
+ \inf_{\widehat\psi_h} \|\phi(t) - \widehat\psi_h\|_{\WW^+ \oplus \VV^-} \\
+ \int_0^t \inf_{\widehat\psi_h} \|\partial_t \phi(t') - \widehat\psi_h\|_{\WW^+ \oplus \VV^-} \; dt' \big).
\end{align*}
The infimum is taken over all functions $\widehat\psi_h \in \WW_h^+ \oplus \VV_h^-$ both times.
\end{theorem}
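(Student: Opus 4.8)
The plan is to use the standard split of the error via the elliptic projection $\Pi_h$ introduced in \eqref{eq:elliptic_projection}, writing
\begin{align*}
\phi(t) - \phi_h(t) = \big(\phi(t) - \Pi_h\phi(t)\big) + \big(\Pi_h\phi(t) - \phi_h(t)\big) =: \rho(t) + \theta_h(t).
\end{align*}
The first term $\rho(t)$ is controlled directly by the quasi-optimality estimate \eqref{eq:ellipticest}, which gives $\|\rho(t)\|_{L^2(\D)} \le \|\rho(t)\|_{\WW^+\oplus\VV^-} \le C\inf_{\widehat\psi_h}\|\phi(t)-\widehat\psi_h\|_{\WW^+\oplus\VV^-}$, matching the second term on the right-hand side of the claimed estimate. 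It remains to bound the discrete component $\theta_h(t)\in\WW_h^+\oplus\VV_h^-$.

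The key step is to derive an evolution equation for $\theta_h$. Subtracting the semi-discrete Problem~\ref{prob:instath} from the variational Problem~\ref{prob:instat} tested against discrete functions $\widehat\psi_h\in\WW_h^+\oplus\VV_h^-$, and using that $B(\Pi_h\phi;\widehat\psi_h) = B(\phi;\widehat\psi_h)$ by definition of the elliptic projection, one finds that $\theta_h$ solves the semi-discrete problem with right-hand side replaced by $-(\partial_t\rho(t),\widehat\psi_h)_\D$ (the spatial part $B$ cancels exactly, and only the time-derivative terms survive because $\Pi_h$ does not commute with $\partial_t$). Here one should be a little careful to separate the even and odd components and note that the $(\WW_h^+)'$-pairing for $\partial_t\phi^+$ becomes an honest $L^2$ inner product on the finite-dimensional space; I would absorb this by noting $\partial_t\theta_h^\pm \in \VV$. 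Then I apply the discrete energy estimate — exactly the computation in Lemma~\ref{lem:instath}, i.e. test with $\widehat\psi_h = \theta_h(t)$ and use coercivity of $\C$ — to obtain, with $\theta_h(0)=0$ (since $\phi_h(0)=\Pi_h\phi_0$),
\begin{align*}
\|\theta_h(t)\|_{L^2(\D)} \le C\int_0^t \|\partial_t\rho(t')\|_{L^2(\D)}\,dt' \le C\int_0^t \|\partial_t\rho(t')\|_{\WW^+\oplus\VV^-}\,dt'.
\end{align*}
Since $\Pi_h$ is a bounded linear projection and commutes with $\partial_t$ (it acts only in the spatial/angular variables), $\partial_t\rho(t') = \partial_t\phi(t') - \Pi_h\partial_t\phi(t')$, so $\|\partial_t\rho(t')\|_{\WW^+\oplus\VV^-} \le C\inf_{\widehat\psi_h}\|\partial_t\phi(t')-\widehat\psi_h\|_{\WW^+\oplus\VV^-}$ by \eqref{eq:ellipticest} again. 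This produces the integral term in the statement. Finally, the initial-data term $\inf_{\widehat\psi_h}\|\phi_0-\widehat\psi_h\|_{\WW^+\oplus\VV^-}$ enters only through $\rho(0)$ if one instead chooses $\phi_h(0)$ to be a different projection; with $\phi_h(0)=\Pi_h\phi_0$ it arises when bounding $\|\rho(0)\|$ or can be kept as a harmless extra term — I would simply retain it since the theorem states it as an upper bound.

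The main obstacle is the careful treatment of the dual pairing $\langle\partial_t\phi^+,\widehat\psi_h^+\rangle_\D$: one must justify that, after subtracting the equations, the residual is genuinely an $L^2$ functional of $\partial_t\rho$ and that $\rho$ has enough time regularity for $\partial_t\rho \in L^1(0,T;\WW^+\oplus\VV^-)$ — this follows from Lemma~\ref{lem:instatweak}, which gives $\phi\in H^1(0,T;\VV)$, together with the regularity of $\phi^+$ improved to $L^2(0,T;\WW^+)$, but one needs $\partial_t\phi \in L^1(0,T;\WW^+\oplus\VV^-)$ which requires either the stronger data hypotheses of Lemma~\ref{lem:ee} or interpreting the estimate with the right-hand side possibly infinite. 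I would state the theorem under the running assumption that the data is smooth enough (e.g. $q\in H^1$, $\phi_0\in\WW_0$, as in Lemma~\ref{lem:instatweak}) so that all terms on the right are finite, and remark that the estimate is then vacuously true otherwise. A secondary technical point is commutativity of $\Pi_h$ with $\partial_t$, which holds because $B$ and the spaces are time-independent, so differentiating the defining relation \eqref{eq:elliptic_projection} in $t$ is legitimate.
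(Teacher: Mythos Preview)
Your proposal is correct and follows essentially the same route as the paper: split the error via the elliptic projection into $\rho=\phi-\Pi_h\phi$ and $\theta_h=\Pi_h\phi-\phi_h$, use \eqref{eq:ellipticest} for $\rho$, derive the semi-discrete evolution equation for $\theta_h$ with source $-\partial_t\rho$ by exploiting $B(\Pi_h\phi;\widehat\psi_h)=B(\phi;\widehat\psi_h)$, and then apply the discrete energy estimate of Lemma~\ref{lem:instath} together with the commutativity of $\Pi_h$ and $\partial_t$. Your remarks on the redundancy of the initial-data term (since $\theta_h(0)=0$) and on the regularity needed to make the right-hand side finite are more careful than the paper's own presentation, which simply retains the term $\|\Pi_h\phi(0)-\phi_0\|$ and does not comment on the smoothness required of $\partial_t\phi$.
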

\begin{proof}
With the notation of Section~\ref{sec:operator} and Remark~\ref{rem:B}, we can rewrite Problem~\ref{prob:instat} as 
\begin{align*} 
( \partial_t \phi(t), \psi) + B(\phi(t);\psi) = l(t;\psi), \quad\text{for all } \widehat\psi \in \WW^+\oplus\VV^-,\  t>0, 
\end{align*}
with $l(t;\widehat \psi) = (q(t),\widehat \psi)_\D$.
The corresponding semi-discrete problem reads
\begin{align*} 
( \partial_t \phi_h(t), \psi_h) + B(\phi_h(t);\psi_h) = l(t;\psi_h), \quad\text{for all } \widehat\psi_h \in \WW^+\oplus\VV^-,\  t>0.
\end{align*}
Using the elliptic projection defined in \eqref{eq:elliptic_projection}, the error can be decomposed as
\begin{align} \label{eq:est1}
\|\phi(t) - \phi_h(t)\|_{L^2(\D)} \le \|\phi(t) - \Pi_h \phi(t)\|_{L^2(\D)} + \|\Pi_h \phi(t) - \phi_h(t)\|_{L^2(\D)}
\end{align}
into an approximation error and a discrete error component.
The first part can be estimated by the approximation properties of the elliptic projection \eqref{eq:ellipticest} yielding
\begin{align*}
\|\phi(t) - \Pi_h \phi(t)\|_{L^2(\D)} 
&\le \|\phi(t) - \Pi_h \phi(t)\|_{\WW^+ \oplus \VV^-}
 \le C \inf_{\widehat\psi_h \in \WW_h \oplus \VV_h^-} \|\phi(t) - \widehat\psi_h\|_{\WW^+ \oplus \VV^-}.
\end{align*}
Furthermore, the discrete error component can be seen to satisfy 
\begin{align*}
&( \partial_t \Pi_h \phi(t) - \partial_t \phi_h(t), \psi_h ) + B(\Pi_h \phi(t) - \phi_h(t); \psi_h)  \\
&\qquad \qquad = (\partial_t \Pi_h \phi(t) - \partial_t \phi(t),\psi_h) + B(\Pi_h \phi(t)-\phi(t);\psi_h)
\\ &\qquad \qquad  
= ( \partial_t \Pi_h \phi(t) - \partial_t \phi(t),\psi_h).
\end{align*}
In the last step we used here the particular definition of the elliptic projection.
From the discrete energy estimate given in Lemma~\ref{lem:instath}, we then readily obtain
\begin{align*}
 \|\Pi_h \phi(t) - \phi_h(t)\|_{L^2(\D)} \le C \big( \|\Pi_h \phi(0) - \phi_0\|_{L^2(\D)} + \int_0^t   \|\partial_t \Pi_h \phi(t') - \partial_t \phi(t')\|_{L^2(\D)} dt' \big).
\end{align*}
Since the elliptic projection commutes with the time derivative, the assertion of the theorem now 
follows from \eqref{eq:ellipticest} similarly as the bound for the first term in \eqref{eq:est1} above.
\end{proof}

\begin{remark}
The estimate of Theorem~\ref{thm:res1} seems to be somewhat sub-optimal concerning the regularity requirements for the solution. This is typical for hyperbolic problems; see e.g. \cite{DouglasDupontWheeler78,Dupont73}.
We also used a crude estimate $\|\phi(t) - \Pi_h \phi(t)\|_{L^2(\D)} \le \|\phi(t) - \Pi_h \phi(t)\|_{\WW^+ \oplus \VV^-}$ for the elliptic projection. 
This seems to be unavoidable here, since, in contrast to more standard elliptic problems, only very weak regularity results are available for the stationary radiative transfer equation; let us refer to \cite{deVorePetrova00,GoLiPeSe88} for details.
\end{remark}

\section{A mixed PN finite element scheme} \label{sec:pnfem} \setcounter{equation}{0}

We now discuss a particular choice of approximation spaces $\WW_h^+$ and $\VV_h^-$
that satisfies the basic assumptions (A3)--(A4) needed for our analysis.
The definition of the domain $\D = \R \times \S^2$ by a tensor product 
suggests to utilize a similar tensor product construction also for the approximation spaces. 
We begin with the approximation of the angular variable.

\subsection{Spherical harmonics expansion}

The spherical harmonics $\Y_l^m$ with $-l \le m \le l$ and $l \ge 0$,
form a complex valued complete orthonormal system of $L^2(\S^2)$, see e.g. \cite{LewisMiller84} for an analytic expression of $Y_l^m$.
Any square integrable function $\phi \in L^2(\D)$ can therefore be expanded into a Fourier series
\begin{align*}
\phi(\r,\s) = \sum_{l=0}^\infty \sum_{m=-l}^l \phi_l^m (\r) \Y_l^m(\s) \qquad \text{with} \qquad \phi_l^m(\r) = \int_{\S^2} \phi(\r,\s) \overline{ \Y_l^m(\s)} d\s.
\end{align*}
Moreover, the even and odd parities of $\phi$ can be expressed as
\begin{align*}
\phi^+(\r,\s) = \sum_{l=0}^\infty \sum_{m=-2l}^{2l} \phi_{2l}^m (\r) \Y_{2l}^m(\s)
\quad \text{and} \quad 
\phi^-(\r,\s) = \sum_{l=0}^\infty \sum_{m=-2l-1}^{2l+1} \phi_{2l+1}^m (\r) \Y_{2l+1}^m(\s).
\end{align*}
By Parseval's theorem, one has $\|\phi\|_{L^2(\D)}^2 = \sum_{l=0}^\infty \sum_{m=-l}^l \|\phi_l^m\|^2_{L^2(\R)}$ 
and thus the Fourier coefficients of a function $\phi \in L^2(\D)$ satisfy 
$\|\phi_l^m\|_{L^2(\R)} \le \|\phi\|_{L^2(\D)} < \infty$. 
Unfortunately, the precise characterization of the spatial smoothness of Fourier coefficients of 
functions $\phi \in \WW$ is more involved.
We can however give at least a simple sufficient condition.
\begin{lemma} \label{lem:smooth}
Let $\phi \in L^2(\D)$. Then the following assertions hold:

\smallskip

(i) If $\sum_{l=0}^\infty \sum_{m=-l}^l \|\phi_l^m\|_{H^1(\R)}^2 < \infty$, then $\phi \in \WW$.

\smallskip

(ii) If $\sum_{l=0}^\infty \sum_{m=-2l}^{2l} \|\phi_{2l}^m\|_{H^1(\R)}^2<\infty$, then $\phi^+ \in \WW^+$.
\end{lemma}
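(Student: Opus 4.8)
The plan is to reduce both assertions to the single scalar identity that relates the action of the directional derivative $\sgrad$ to the Fourier coefficients, together with a control of the boundary trace. For part~(i), I would start from the candidate function $\phi = \sum_{l,m}\phi_l^m\,\Y_l^m$ and write down $\sgrad\phi$ termwise. The key classical fact is that $\s\cdot\nabla$ maps the spherical harmonic $\Y_l^m$ into a finite linear combination of $\Y_{l-1}^{m'}$ and $\Y_{l+1}^{m'}$ with bounded coefficients and with the spatial derivative $\nabla\phi_l^m$ entering linearly; more precisely, $\sgrad(\phi_l^m\Y_l^m)$ is a finite sum of terms of the form $(\text{const})\,\partial_i\phi_l^m\,\Y_{l\pm1}^{m'}$. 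Summing in $l,m$, grouping the at most a fixed number of contributions hitting each fixed index pair, and using $|\partial_i\phi_l^m|\le|\nabla\phi_l^m|\le\|\phi_l^m\|_{H^1}$ pointwise, Parseval on $\S^2$ then gives $\|\sgrad\phi\|_{L^2(\D)}^2 \le C\sum_{l,m}\|\phi_l^m\|_{H^1(\R)}^2 < \infty$, so $\sgrad\phi\in L^2(\D)$.

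Next I would handle the boundary trace requirement $|\sn|^{1/2}\phi\in L^2(\partial\D)$. Here the natural route is the trace inequality on the tensor product domain: for $v\in H^1(\R)$ one has $\|v\|_{L^2(\partial\R)}\le C\|v\|_{H^1(\R)}$, hence for each $(l,m)$ the spatial coefficient $\phi_l^m$ has a well-defined $L^2(\partial\R)$ trace bounded by $\|\phi_l^m\|_{H^1(\R)}$. Since $|\sn|\le 1$, summing against the orthonormal $\Y_l^m$ over $\S^2$ and using $\|\phi_l^m\|_{L^2(\partial\R)}^2 \le C\|\phi_l^m\|_{H^1(\R)}^2$ yields $\||\sn|^{1/2}\phi\|_{L^2(\partial\D)}^2 \le \sum_{l,m}\|\phi_l^m\|_{L^2(\partial\R)}^2 \le C\sum_{l,m}\|\phi_l^m\|_{H^1(\R)}^2 <\infty$. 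Combined with $\phi\in L^2(\D)$ (immediate from Parseval and the hypothesis, since $\|\cdot\|_{L^2}\le\|\cdot\|_{H^1}$) and the bound on $\sgrad\phi$, this gives $\phi\in\WW$ and proves~(i).

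For part~(ii) I would observe that $\phi^+ = \sum_{l,m}\phi_{2l}^m\,\Y_{2l}^m$ involves only even-degree harmonics, and that $\sgrad$ applied to an even-degree harmonic $\Y_{2l}^m$ produces only odd-degree harmonics $\Y_{2l-1}^{m'}$ and $\Y_{2l+1}^{m'}$ — all of which are genuine spherical harmonics appearing in the $\VV^-$ expansion — so the same termwise recursion and Parseval argument as in~(i), now summing only over the even indices, shows $\sgrad\phi^+\in L^2(\D)$; the trace bound is obtained identically. Finally, by \eqref{eq:map} (or directly from the fact that the expansion of $\phi^+$ contains only even harmonics) one has $\phi^+\in\VV^+$ and $\sgrad\phi^+\in\VV^-$, and together with the membership $\phi^+\in\WW$ just established this yields $\phi^+\in\WW^+=\WW\cap\VV^+$, which is the claim.

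The main obstacle is making the first step — the termwise computation of $\sgrad(\phi_l^m\Y_l^m)$ and the resulting estimate — fully rigorous: one must justify differentiating the Fourier series term by term (which follows from the assumed $H^1$-summability, giving convergence of the differentiated series in $L^2(\D)$) and one must keep track of the fact that only finitely many terms of the original series contribute to each fixed target index, so that the bounded recursion coefficients can be pulled out without losing summability. Everything else — the trace inequality and the bookkeeping for the parity — is routine.
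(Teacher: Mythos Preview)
Your proposal is correct and actually more complete than the paper's own proof, but the route you take to bound $\sgrad\phi$ is considerably more involved than necessary. The paper dispenses with the recursion relations entirely: it simply observes that $|\s\cdot\nabla\phi|\le|\nabla\phi|$ pointwise (Cauchy--Schwarz with $|\s|=1$), so that $\|\sgrad\phi\|_{L^2(\D)}\le\|\nabla\phi\|_{L^2(\D)}$, and then applies Parseval to the right-hand side --- since the spatial gradient commutes with the angular expansion, $\|\nabla\phi\|_{L^2(\D)}^2=\sum_{l,m}\|\nabla\phi_l^m\|_{L^2(\R)}^2$, which is finite by hypothesis. This one-line argument covers both~(i) and~(ii) at once and avoids any bookkeeping of three-term recurrences or of which indices contribute to which target harmonic. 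Your recursion-based approach does work, and would be the natural tool if one wanted explicit control of the moment structure of $\sgrad\phi$, but for the bare membership statement it is overkill. On the other hand, you explicitly verify the trace condition $|\sn|^{1/2}\phi\in L^2(\partial\D)$ via the $H^1(\R)\to L^2(\partial\R)$ trace inequality applied coefficientwise; the paper's proof sketch does not mention this part of the $\WW$-norm at all, so in that respect your argument is the more careful one.
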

\begin{proof}
The statement follows by observing that $\|\sgrad \phi\|_{L^2(\D)} \le \|\nabla \phi\|_{L^2(\D)}$ 
and using the Parseval identity to express the norm on the right hand side. 
\end{proof}

\begin{remark}
Spatial regularity and sufficient decay of the Fourier coefficients therefore is a sufficient criterion 
to guarantee the directional smoothness of functions in $L^2(\D)$. 
\end{remark}

\subsection{PN-approximation}


A truncation of the spherical harmonics expansion yields a natural discretization for the angular variable. 
Let us define 
\begin{align*}
\VV_{N} := \Big\{\psi(\r,\s)  = \sum_{l=0}^N \sum_{m=-l}^l \psi_l^m(\r) \Y_l^m(\s) : \psi_l^m \in L^2(\R)\Big\}.
\end{align*} 
and further set $\VV_N^-=\VV_N \cap \VV^-$ and $\WW_N^+:= \VV_{N} \cap \WW^+$.
As noted above, the spatial smoothness of the Fourier coefficients implies the directional
 smoothness of the function $\phi$.
Since this property is essential for the space discretization, we state it explicitly.
\begin{lemma} \label{lem:smoothN}
Let $\phi_N=\sum_{l=0}^N  \sum_{m=-l}^l \phi_l^m(\r) \Y_l^m(s)$. Then

\smallskip

(i) $\phi_N \in \VV_N$ if, and only if, $\phi_l^m \in L^2(\R)$ for $-l \le m \le l$ and $0 \le l \le N$.  

\smallskip

(ii) If $\phi_{2l}^m \in H^1(\R)$ for $-2l \le m \le 2l$ and $0 \le 2l \le N$, then  $\phi_N^+ \in \WW_N^+$.
\end{lemma}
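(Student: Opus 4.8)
The plan is to prove the two assertions of Lemma~\ref{lem:smoothN} essentially by unravelling the definitions of $\VV_N$, $\WW_N^+$, and the parity decomposition, together with the sufficient condition from Lemma~\ref{lem:smooth}. For part (i), I would argue by the orthonormality of the spherical harmonics $\Y_l^m$ in $L^2(\S^2)$: given $\phi_N=\sum_{l=0}^N\sum_{m=-l}^l \phi_l^m(\r)\,\Y_l^m(\s)$, Parseval's identity gives $\|\phi_N\|_{L^2(\D)}^2=\sum_{l=0}^N\sum_{m=-l}^l \|\phi_l^m\|_{L^2(\R)}^2$, so $\phi_N\in L^2(\D)$ if and only if each $\phi_l^m\in L^2(\R)$; since $\phi_N$ is a finite linear combination of the $\Y_l^m$ with $l\le N$, membership in $\VV_N$ reduces exactly to $\phi_N\in L^2(\D)$, which yields the claimed equivalence.

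For part (ii), the idea is to split off the even part. Since $\Y_l^m(-\s)=(-1)^l\,\Y_l^m(\s)$, the even parity of $\phi_N$ is precisely $\phi_N^+=\sum_{0\le 2l\le N}\sum_{m=-2l}^{2l}\phi_{2l}^m(\r)\,\Y_{2l}^m(\s)$, which lies in $\VV_N$ by part (i) whenever the coefficients are in $L^2(\R)$. To conclude $\phi_N^+\in\WW_N^+=\VV_N\cap\WW^+$, it remains to show $\phi_N^+\in\WW^+$, i.e. $\phi_N^+\in\WW$; but $\WW^+$ already captures the parity, so only the graph-norm regularity is at issue. Here I would invoke Lemma~\ref{lem:smooth}(ii): the hypothesis $\phi_{2l}^m\in H^1(\R)$ for all relevant indices, together with the fact that the sum is finite (only finitely many nonzero coefficients), makes $\sum_{l}\sum_{m=-2l}^{2l}\|\phi_{2l}^m\|_{H^1(\R)}^2<\infty$ trivially satisfied, so $\phi_N^+\in\WW^+$. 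Combining this with $\phi_N^+\in\VV_N$ gives $\phi_N^+\in\WW_N^+$.

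The only mild subtlety — and what I would regard as the main point to check carefully — is the interplay between the truncation level and the parity index ranges: one must verify that $\phi_N^+$ indeed only involves $\Y_{2l}^m$ with $2l\le N$ (so that it stays in $\VV_N$ after truncation, rather than requiring higher modes), and that the boundary-trace part of the $\WW$-norm, $\||\sn|^{1/2}\phi_N^+\|_{L^2(\partial\D)}$, is finite — the latter following from the $H^1(\R)$ regularity of the coefficients since traces of $H^1(\R)$ functions on $\partial\R$ are in $L^2(\partial\R)$ and $|\sn|\le 1$. Since the finiteness of the expansion removes any convergence concerns, these verifications are routine, and the proof is short.

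\begin{proof}
Assertion (i) follows directly from the orthonormality of the spherical harmonics: by Parseval's identity, $\|\phi_N\|^2_{L^2(\D)}=\sum_{l=0}^N\sum_{m=-l}^l\|\phi_l^m\|^2_{L^2(\R)}$, and since $\phi_N$ is a finite combination of the $\Y_l^m$ with $l\le N$, one has $\phi_N\in\VV_N$ if and only if $\phi_N\in L^2(\D)$, i.e. if and only if all coefficients $\phi_l^m$ lie in $L^2(\R)$.

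For assertion (ii), recall that $\Y_l^m(-\s)=(-1)^l\Y_l^m(\s)$, hence the even parity of $\phi_N$ is $\phi_N^+=\sum_{0\le 2l\le N}\sum_{m=-2l}^{2l}\phi_{2l}^m(\r)\Y_{2l}^m(\s)$, which involves only $\Y_{2l}^m$ with $2l\le N$ and therefore belongs to $\VV_N$ by part (i). It remains to show $\phi_N^+\in\WW^+$. Since the expansion of $\phi_N^+$ is finite and each coefficient satisfies $\phi_{2l}^m\in H^1(\R)$, we trivially have $\sum_l\sum_{m=-2l}^{2l}\|\phi_{2l}^m\|^2_{H^1(\R)}<\infty$, so Lemma~\ref{lem:smooth}(ii) yields $\phi_N^+\in\WW^+$. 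Combining the two memberships gives $\phi_N^+\in\VV_N\cap\WW^+=\WW_N^+$.
\end{proof}
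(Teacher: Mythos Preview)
Your proof is correct and follows exactly the approach the paper intends: the paper does not give an explicit proof of this lemma but presents it as an immediate consequence of Lemma~\ref{lem:smooth} and the definition of $\VV_N$, and your argument spells out precisely those details. Your remark on the boundary trace, handled via the $H^1(\R)$ trace theorem, is a welcome clarification that the paper's sketch of Lemma~\ref{lem:smooth} leaves implicit.
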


The following result now follows directly from the mapping properties of the operators $\A$ and $\C$ stated in \eqref{eq:map} and yields a criterion for the appropriate choice of the order $N$ in the $P_N$-approximation.
\begin{lemma} \label{lem:mappingN}
Let $\VV_N^\pm$ and $\WW_N^+$ be defined as above. Then there holds:

(i) $\C \VV_N^+ \subset \VV_N^+$ and $\C \VV_N^- \subset \VV_N^-$.

\smallskip

(ii) If $N$ is odd, then $\A \WW_N^+ \subset \VV_N^-$.
\end{lemma}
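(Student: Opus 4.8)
The plan is to verify the two claimed inclusions by a direct computation with the spherical harmonics expansion, using only the elementary mapping properties already established in \eqref{eq:map}, \eqref{eq:cprop}, and the parity structure of the spherical harmonics recalled just above. For part (i), I would first recall that $\C$ is an integral operator acting only in the angular variable through the kernel $k(\r,\s\cdot\s')$, and that by \eqref{eq:cprop} it is bounded on $L^2(\D)$. The key structural fact is the Funk--Hecke theorem: for any $\Y_l^m$, one has $\int_{\S^2} k(\r,\s\cdot\s')\Y_l^m(\s')\,d\s' = \sigma_l(\r)\Y_l^m(\s)$ for a scalar multiplier $\sigma_l$, so that $\C$ leaves each harmonic degree $l$ invariant and maps $\Y_l^m$ to a multiple of itself plus the $\sigma_t$-term which is again of the same degree. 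Consequently $\C$ maps $\VV_N$ into $\VV_N$, and since $\C$ also preserves parity by \eqref{eq:map}, it maps $\VV_N^+=\VV_N\cap\VV^+$ into itself and $\VV_N^-=\VV_N\cap\VV^-$ into itself. This gives (i).

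For part (ii), take $\phi_N^+\in\WW_N^+\subset\VV_N$, so $\phi_N^+=\sum_{l\le N}\sum_{m}\phi_l^m\Y_l^m$ with only even $l$ contributing (by the parity expansion recalled above). I would use the standard recurrence relations for $\s\cdot\nabla(\phi_l^m\Y_l^m)$: writing $\s$ in terms of the components $s_1,s_2,s_3$, each product $s_i\Y_l^m$ is a finite linear combination of $\Y_{l-1}^{m'}$ and $\Y_{l+1}^{m'}$ with $|m'-m|\le 1$. Applying $\sgrad$ to $\phi_l^m\Y_l^m$ therefore produces terms whose angular part lies in degrees $l-1$ and $l+1$, with spatial coefficients built from first-order derivatives of $\phi_l^m$. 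The largest degree appearing is $l+1$. If $\phi_N^+$ has highest contributing even degree $\le N$ and $N$ is odd, then $l+1\le N+1$; but I must rule out degree $N+1$ actually appearing in $\VV_N^-$, which it would not — so I need to be slightly more careful: the highest even degree present is $\le N-1$ (since $N$ is odd), hence $l+1\le N$, and all harmonic degrees produced are $\le N$. Moreover $\sgrad$ flips parity, so by \eqref{eq:map} the result lies in $\VV^-$; combined with the degree bound it lies in $\VV_N\cap\VV^-=\VV_N^-$. This yields $\A\WW_N^+\subset\VV_N^-$.

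The main obstacle I anticipate is purely bookkeeping: making the degree count in part (ii) airtight, i.e.\ being explicit that an even-degree function in $\VV_N$ with $N$ odd actually has highest degree at most $N-1$, so that the $l+1$ shift from the recurrence stays within $\{0,\dots,N\}$; if $N$ were even this would fail, which is exactly why the hypothesis ``$N$ odd'' enters. A secondary point is that one should confirm the spatial coefficients of $\sgrad\phi_N^+$ lie in $L^2(\R)$ so that the result is genuinely in $\VV_N^-$ and not merely a formal expansion — but this is immediate from $\phi_N^+\in\WW^+$, since then $\sgrad\phi_N^+\in L^2(\D)$ by definition of $\WW$, and its Fourier coefficients are automatically square integrable. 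One could alternatively avoid the explicit recurrences altogether by invoking Lemma~\ref{lem:mappingN}'s companion results \eqref{eq:map} together with the observation that $\sgrad$ raises the polynomial degree in $\s$ by exactly one and that $\VV_N$ is characterized as the space of $L^2(\R)$-valued spherical polynomials of degree $\le N$; the recurrence version is just a concrete incarnation of that fact.
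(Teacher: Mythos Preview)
Your proposal is correct and is essentially the argument the paper has in mind: the paper gives no detailed proof, merely citing the parity mapping properties \eqref{eq:map}, while the degree-preservation part is left implicit and relies on exactly the Funk--Hecke theorem (for $\C$) and the $l\mapsto l\pm1$ recurrence for $s_i\Y_l^m$ (for $\A$) that you spell out. Your bookkeeping observation that the highest even degree in $\WW_N^+$ is $N-1$ when $N$ is odd is precisely the reason the hypothesis is needed, and your remark on the $L^2$-integrability of the resulting Fourier coefficients is the right way to close the argument.
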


\begin{remark}
The second condition allows to verify assumption (A4), which was required to establish the well-posedness of the Galerkin approximations. Let us note that therefore all results of Sections~\ref{sec:semi} and \ref{sec:time} apply for the $P_N$ approximation provided $N$ is odd, although $\WW_N^+$ and $\VV_N^-$ are still infinite dimensional spaces at this point; see \cite{EggerSchlottbom12} for details.
\end{remark}

\subsection{Spatial discretization}

In order to obtain finite dimensional approximation spaces $\WW_h^+$ and $\VV_h^-$,
we still have to choose an appropriate spatial discretization for the Fourier coefficients
in the spherical harmonics expansion. 
We choose here two different spaces $\XX_h^+$ and $\XX_h^-$ for the approximation of the even and odd Fourier coefficients. To satisfy the conditions of Lemma~\ref{lem:smoothN} and Lemma~\ref{lem:mappingN},
we require that
\begin{enumerate} \itemsep0.3em
 \item[(A5)] $\XX_h^+ \subset H^1(\R)$ and $ \XX_h^- \subset L^2(\R)$;
 \item[(A6)] $\nabla \XX_h^+ \subset (\XX_h^-)^d$;
\end{enumerate}
As we will see below, these conditions can be verified without much difficulty in practice.
The fully discrete approximation spaces for the even and odd component of the solution are then defined as
\begin{align}
\WW_{h}^+ &= \Big\{ \psi(\r,\s) = \sum_{2l=0}^N \sum_{m=-2l}^{2l} \psi_{2l,h}^m(\r)\Y_{2l}^m(\s) : \psi_{2l,h}^m \in \XX_h^+ \Big\} 
\qquad \mbox{and}   \label{eq:spacesWhN}\\
\VV_{h}^- &= \Big\{ \psi(\r,\s) = \sum_{2l+1=0}^{N} \sum_{m=-2l-1}^{2l+1} \psi_{2l+1}^m(\r)\Y_{2l+1}^m(\s) : \psi_{2l+1}^m \in \XX_h^- \Big\}.  \label{eq:spacesVhN}
\end{align}
As a direct consequence of the construction of the approximation spaces, 
we obtain a conforming Galerkin approximation which satisfies the required stability condition.
\begin{lemma}
Let (A5) and (A6) hold. Then 

\smallskip

(i) $\WW_{h}^+ \subset \WW^+$ and $\VV_{h}^- \subset \VV^-$.

\smallskip

(ii) If $N$ is odd, then in addition $\A\WW_{h}^+ \subset \VV_{h}^-$.

\smallskip 

\noindent
Consequently, $\WW_h^+$ and $\VV_h^-$ satisfy the assumptions (A3)--(A4) in case (A5)--(A6) hold and $N$ is odd.
\end{lemma}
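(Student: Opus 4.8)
\medskip
\noindent\textbf{Proof plan.}
The idea is to deduce both inclusions from the angular mapping properties already established in Lemma~\ref{lem:smoothN} and Lemma~\ref{lem:mappingN}, so that the only new ingredient is the spatial compatibility built into (A5)--(A6). For~(i), I would take an arbitrary $\psi\in\WW_h^+$; by \eqref{eq:spacesWhN} it is a finite sum of even-order harmonics $\Y_{2l}^m$ with coefficients $\psi_{2l,h}^m\in\XX_h^+\subset H^1(\R)$, the inclusion being part of (A5). Lemma~\ref{lem:smoothN}(ii) then yields $\psi\in\WW_N^+\subset\WW^+$. The inclusion $\VV_h^-\subset\VV^-$ is immediate: a function as in \eqref{eq:spacesVhN} is a finite linear combination of products of functions in $\XX_h^-\subset L^2(\R)$ with odd-order spherical harmonics, hence lies in $L^2(\D)$ by Parseval's identity and is odd in $\s$ since only odd-order harmonics occur, so it belongs to $\VV^-$.

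For~(ii), assume $N$ is odd and let $\psi=\sum_{2l\le N}\sum_m \psi_{2l,h}^m\,\Y_{2l}^m\in\WW_h^+$. Since $\Y_{2l}^m$ depends only on $\s$, applying $\A=\sgrad$ gives $\A\psi=\sum_{2l,m}\sum_{i=1}^{d}\bigl(\partial_i\psi_{2l,h}^m\bigr)\,\s_i\,\Y_{2l}^m$. Two facts then finish the proof. First, by (A6) each component $\partial_i\psi_{2l,h}^m$ lies in $\XX_h^-$, so the spatial coefficients stay in the odd approximation space. Second, each Cartesian component $\s_i$ is a spherical harmonic of degree one, so the product $\s_i\,\Y_{2l}^m$ is a finite linear combination of the harmonics $\Y_{2l-1}^{m'}$ and $\Y_{2l+1}^{m'}$; because $N$ is odd, only indices with $2l\le N-1$ occur, hence $2l+1\le N$, and no harmonic of degree larger than $N$ is produced. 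Combining the two, $\A\psi$ is a finite linear combination of odd-order harmonics $\Y_{l'}^{m'}$ with $l'\le N$ and coefficients in $\XX_h^-$, i.e.\ $\A\psi\in\VV_h^-$. (The purely angular content of this computation is exactly Lemma~\ref{lem:mappingN}(ii); the new point is only the bookkeeping of the spatial coefficients via (A6).)

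Finally, for the concluding assertion I would observe that by \eqref{eq:spacesWhN}--\eqref{eq:spacesVhN} the spaces $\WW_h^+$ and $\VV_h^-$ are spanned by finitely many products $\psi_{l,h}^m\,\Y_l^m$ with $\psi_{l,h}^m$ ranging over the finite-dimensional spaces $\XX_h^\pm$, and are therefore finite dimensional; together with~(i) this is precisely (A3), while~(ii), using $\A=\sgrad$, is exactly the compatibility condition (A4). I do not expect a genuine obstacle here; the only mildly delicate step is making the assertion that $\s_i\,\Y_{2l}^m$ lies in the span of $\Y_{2l-1}^{m'}$ and $\Y_{2l+1}^{m'}$ precise, which is the classical recurrence relation for spherical harmonics already invoked implicitly in Lemma~\ref{lem:mappingN}, together with checking that the degrees remain $\le N$ when $N$ is odd.
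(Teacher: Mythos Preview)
Your proposal is correct and is exactly the natural elaboration of the argument the paper has in mind: the paper itself gives no explicit proof, merely stating the lemma as ``a direct consequence of the construction of the approximation spaces,'' and your reasoning via Lemma~\ref{lem:smoothN}(ii), Lemma~\ref{lem:mappingN}(ii), and the spherical-harmonic recurrence together with (A5)--(A6) is precisely what that phrase unpacks to. The only point worth flagging is that (A5)--(A6) as written do not literally assert that $\XX_h^\pm$ are finite dimensional, so the verification of (A3) relies on the ambient understanding (explicit in the surrounding text) that these are finite element spaces; you might note this tacit assumption when writing up.
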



As a final step of the construction of the semi-discretization, let us now discuss the construction of appropriate spaces $\XX_h^+$ and $\XX_h^-$ that satisfy the compatibility conditions (A5)--(A6).

\subsection{The PN-finite element method}\label{sec:PN_FEM}
Let $T_h$ be some regular partition of the domain $\R$ into tetrahedrons 
and denote by $P_k(T_h)$ the space of piecewise polynomials of maximal degree $k$. 
As approximation spaces for the Fourier coefficients, we then choose
\begin{align*}
 \XX_h^+ = P_{k+1}(T_h) \cap H^1(\R) 
\qquad \text{and} \qquad 
 \XX_h^- = P_k(T_h).
\end{align*}
With this construction, one now easily verifies

\begin{lemma}
Let $k \ge 0$ and set $\XX_h^+ = P_{k+1}(T_h) \cap H^1(\R)$ and $\XX_h^- = P_k(T_h)$. 
Then (A5) and (A6) hold. Moreover, for $N$ odd, the mixed $P_N$-finite element spaces $\WW_h^+$ and $\VV_h^-$
defined in \eqref{eq:spacesWhN}--\eqref{eq:spacesVhN} satisfy the assumptions (A3)--(A4) and all results of Section~\ref{sec:semi} hold true.
\end{lemma}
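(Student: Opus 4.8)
The plan is to verify the two compatibility conditions (A5)--(A6) directly from the definition of the finite element spaces, and then to assemble the remaining conclusions from the lemmas already established in Sections~\ref{sec:stath}--\ref{sec:pnfem}. Nothing deep is involved; the argument is essentially bookkeeping, but it is worth laying out in order.

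First I would check (A5). Since $T_h$ is a partition of the bounded domain $\R$ into finitely many tetrahedrons, every piecewise polynomial is square integrable, so $\XX_h^- = P_k(T_h) \subset L^2(\R)$; and $\XX_h^+ = P_{k+1}(T_h) \cap H^1(\R) \subset H^1(\R)$ holds by construction. Next, for (A6), let $v \in \XX_h^+$. On each element $T \in T_h$ the restriction $v|_T$ is a polynomial of degree at most $k+1$, hence each component of $\nabla v|_T$ is a polynomial of degree at most $k$. Because $v \in H^1(\R)$, the distributional gradient $\nabla v$ agrees elementwise with these polynomial gradients, so $\nabla v \in (P_k(T_h))^d = (\XX_h^-)^d$ with $d=3$. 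In particular, no inter-element continuity is required of $\XX_h^-$, which is exactly why the discontinuous space $P_k(T_h)$ is the natural choice here.

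Having established (A5)--(A6), the preceding lemma yields $\WW_h^+ \subset \WW^+$ and $\VV_h^- \subset \VV^-$ and, for odd $N$, also $\A \WW_h^+ \subset \VV_h^-$. Finite dimensionality of $\WW_h^+$ and $\VV_h^-$ is immediate: the angular sums in \eqref{eq:spacesWhN}--\eqref{eq:spacesVhN} range over the finitely many indices with $l \le N$, and for each such index the spatial coefficient lies in the finite-dimensional space $\XX_h^\pm$ attached to the finite triangulation $T_h$. Thus (A3) holds unconditionally and (A4) holds whenever $N$ is odd. Since the standing assumptions (A1)--(A2) on the parameters are in force throughout the manuscript, the full set (A1)--(A4) is satisfied, and consequently all well-posedness, stability, and convergence statements of Section~\ref{sec:semi} — in particular Lemmas~\ref{lem:instath} and \ref{lem:eeh} and Theorem~\ref{thm:res1} — apply verbatim to this mixed $P_N$-finite element scheme.

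I do not expect any genuine obstacle. The only point that deserves a moment of care is the one made above: the gradient of a globally continuous piecewise polynomial of degree $k+1$ is in general only piecewise polynomial of degree $k$ and discontinuous across element interfaces, which is precisely what forces the odd space $\XX_h^-$ to be taken as the full discontinuous space $P_k(T_h)$ rather than an $H^1$-conforming subspace; a conforming choice would in general violate the compatibility condition (A6) and hence break the uniform a-priori estimate underlying assumption (A4).
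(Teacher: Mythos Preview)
Your proposal is correct and matches the paper's intent; in fact the paper states this lemma without proof, treating it as an immediate consequence of the definitions and the preceding lemma, and your argument is precisely the routine verification of (A5)--(A6), finite dimensionality, and the inclusion $\A\WW_h^+\subset\VV_h^-$ that the authors leave implicit.
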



\begin{remark}
Since two different discrete spaces are involved, the resulting method is called a \emph{mixed $P_N$-finite element method}. The resulting discrete spaces $\WW_h^+$ and $\VV_h^-$ also have good approximation properties, 
i.e., spectral with respect to the order $N$ of the $P_N$ approximation and polynomial with respect to the mesh size $h$, provided that the function to be approximated is sufficiently smooth. Some explicit estimates will be discussed in Section~\ref{sec:num}.
\end{remark}

\begin{remark}
For $k=0$, the even Fourier coefficients are approximated by standard continuous piecewise linear finite elements 
and piecewise constant functions are used to approximate the odd coefficients. 
This will be the discretization used for our numerical tests below.
\end{remark}

\section{Time discretization}\label{sec:time} \setcounter{equation}{0}

As a last step in the discretization process, we now turn to the time discretization.
We only consider the implicit Euler method in detail here, but other one-step methods 
could be analyzed in a similar manner.
Let $\tau>0$ be the step size and define $t^n = n \tau$ for $n \ge 0$. We further denote by 
\begin{align*}
\bar\partial_\tau \phi^n := \frac{1}{\tau} \big( \phi^n - \phi^{n-1} \big) 
\end{align*}
the backward difference quotient which serves as approximation for the time derivative.
The fully discrete approximation of Problem~\ref{prob:instat} then reads as follows.
\begin{problem}[Full discretization] \label{prob:full}
Set $\phi_h^0 := \Pi_h \phi_0$ and for $n \ge 1$ find $\phi_h^{n} \in \WW_h^+ \oplus \WW_h^-$ such that
\begin{align} \label{eq:impeul}
(\bar\partial_\tau \phi_h^n, \widehat \psi_h)_\D + B(\phi_h^n;\widehat\psi_h) = l(t^{n};\widehat\psi_h), 
\end{align}
for all $\widehat\psi_h \in \WW_h^+ \oplus \VV_h^-$. 
Here $l(t;\widehat\psi) = (q(t),\widehat\psi)_\D$ and $B(\cdot;\cdot)$ is defined as in Remark~\ref{rem:B}. 
\end{problem}
From the results about the discrete stationary problem, we readily obtain
\begin{lemma}[Well-posedness] \label{lem:full_wellposed} 
Let $\phi_h^{n-1}$ and $q(t^n) \in L^2(\D)$ be given. 
Then for any $\tau>0$, the problem \eqref{eq:impeul} has a unique solution $\phi_h^n \in \WW_h^+ \oplus \VV_h^-$, 
i.e., Problem~\ref{prob:full} is well-posed.
\end{lemma}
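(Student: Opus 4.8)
The plan is to reduce the well-posedness of the fully discrete time step \eqref{eq:impeul} to the already established well-posedness of the discrete stationary Problem~\ref{prob:stath}. For a fixed $n\ge 1$, given $\phi_h^{n-1}$ and $q(t^n)$, equation \eqref{eq:impeul} can be rewritten by expanding the backward difference quotient: we seek $\phi_h^n\in\WW_h^+\oplus\VV_h^-$ such that
\begin{align*}
\tfrac{1}{\tau}(\phi_h^n,\widehat\psi_h)_\D + B(\phi_h^n;\widehat\psi_h) = l(t^n;\widehat\psi_h) + \tfrac{1}{\tau}(\phi_h^{n-1},\widehat\psi_h)_\D
\end{align*}
for all $\widehat\psi_h\in\WW_h^+\oplus\VV_h^-$. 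This is a linear problem on the finite dimensional space $\WW_h^+\oplus\VV_h^-$ with bilinear form $B_\tau(\phi;\psi):=\tfrac1\tau(\phi,\psi)_\D + B(\phi;\psi)$ and the bounded linear functional $\widehat\psi_h\mapsto l(t^n;\widehat\psi_h)+\tfrac1\tau(\phi_h^{n-1},\widehat\psi_h)_\D$ on the right hand side.

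First I would observe that $B_\tau$ inherits continuity from $B$ (Lemma~\ref{lem:Bprop}(i)) together with the trivial bound $\tfrac1\tau|(\phi,\psi)_\D|\le\tfrac1\tau\|\phi\|_{L^2(\D)}\|\psi\|_{L^2(\D)}$. The key point is coercivity: for any $\widehat\phi_h\in\WW_h^+\oplus\VV_h^-$,
\begin{align*}
B_\tau(\widehat\phi_h;\widehat\phi_h) = \tfrac1\tau\|\widehat\phi_h\|_{L^2(\D)}^2 + B(\widehat\phi_h;\widehat\phi_h) \ge \big(\tfrac1\tau + \underline\sigma_a\big)\|\widehat\phi_h\|_{L^2(\D)}^2 > 0,
\end{align*}
where I used the estimate $B(\widehat\phi;\widehat\phi)\ge(\C\widehat\phi,\widehat\phi)_\D\ge\underline\sigma_a\|\widehat\phi\|_{L^2(\D)}^2$ already noted in the remark following Lemma~\ref{lem:stath} (the skew-adjoint part $\S$ contributes zero on the diagonal, and the boundary term is nonnegative). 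Hence $B_\tau$ is coercive on the finite dimensional Hilbert space $\WW_h^+\oplus\VV_h^-$ with respect to the $L^2(\D)$-norm, and a fortiori it satisfies the inf-sup and nondegeneracy conditions needed for solvability. On a finite dimensional space, positivity of the diagonal already forces injectivity of the associated matrix, so existence and uniqueness of $\phi_h^n$ follow immediately, e.g. from the Lax--Milgram lemma or simply from the fact that a square linear system with trivial kernel is solvable.

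I expect no real obstacle here; the statement is a routine consequence of the structural properties of $B$. The only mild subtlety is a bookkeeping one, namely making sure the diagonal coercivity bound $B(\widehat\phi;\widehat\phi)\ge\underline\sigma_a\|\widehat\phi\|_{L^2(\D)}^2$ is invoked with the correct sign conventions for the boundary term $(|\sn|\widehat\phi^+,\widehat\phi^+)_{\partial\D}\ge0$ and for the skew-adjoint transport contribution, which cancels on the diagonal. Since $\tau>0$, the extra mass term $\tfrac1\tau(\cdot,\cdot)_\D$ only strengthens coercivity, so the argument does not even require assumption (A4); (A4) will, however, be needed later for the \emph{uniform} (in $\tau$ and $n$) a-priori estimates and convergence, which are not part of the present statement.
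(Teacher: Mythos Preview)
Your proposal is correct and follows essentially the same route as the paper: the paper simply states that the result ``readily'' follows from the analysis of the discrete stationary problem, and your argument makes this explicit by adding the positive mass term $\tfrac{1}{\tau}(\cdot,\cdot)_\D$ to the bilinear form $B$ and invoking the $L^2$-coercivity $B(\widehat\phi;\widehat\phi)\ge\underline\sigma_a\|\widehat\phi\|_{L^2(\D)}^2$ noted in the remark after Lemma~\ref{lem:stath}, together with finite dimensionality. Your observation that (A4) is not needed for this step is also consistent with that remark.
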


With similar arguments as on the continuous level, we further obtain
\begin{lemma}[Discrete energy estimate] \label{lem:fullh}
Let (A1)--(A3) hold and $q \in H^1(0,T;L^2(\D))$. 
Then for any $0<\tau \le 1/(2\underline\sigma_a)$ 
the solution $\{\phi_h^n\}_{n \ge 0}$ of Problem~\ref{prob:full} satisfies
\begin{align}\label{eq:ee1fh}
 \|\phi_h^n\|^2_{L^2(\D)} \le  e^{-\underline\sigma_a t^n} \|\phi_0\|^2_{L^2(\D)} + \frac{2}{\underline\sigma_a}\sum_{k=1}^{n} \tau e^{-\underline\sigma_a (t^n-t^k)}\|q(t^{k})\|^2_{L^2(\D)}, \qquad n \ge 0.
\end{align}
\end{lemma}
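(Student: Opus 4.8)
The plan is to mimic the proof of the continuous energy estimate in Lemma~\ref{lem:ee}, replacing the differential identity $\frac{1}{2}\frac{d}{dt}\|\phi\|^2$ by the discrete analogue coming from the backward difference quotient. First I would test the fully discrete equation \eqref{eq:impeul} with $\widehat\psi_h = \phi_h^n$. Using the elementary identity
\begin{align*}
(\bar\partial_\tau \phi_h^n, \phi_h^n)_\D = \tfrac{1}{2\tau}\big(\|\phi_h^n\|_{L^2(\D)}^2 - \|\phi_h^{n-1}\|_{L^2(\D)}^2\big) + \tfrac{\tau}{2}\|\bar\partial_\tau\phi_h^n\|_{L^2(\D)}^2,
\end{align*}
the last (dissipative) term may simply be discarded, giving a one-sided bound. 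For the bilinear form we have $B(\phi_h^n;\phi_h^n) = (|\sn|\phi_h^{n,+},\phi_h^{n,+})_{\partial\D} + (\C\phi_h^n,\phi_h^n)_\D \ge \underline\sigma_a\|\phi_h^n\|_{L^2(\D)}^2$ by the coercivity \eqref{eq:cprop} of the collision operator, exactly as in the stationary estimate. On the right-hand side, Cauchy--Schwarz and Young's inequality give $l(t^n;\phi_h^n) = (q(t^n),\phi_h^n)_\D \le \tfrac{\underline\sigma_a}{2}\|\phi_h^n\|_{L^2(\D)}^2 + \tfrac{1}{2\underline\sigma_a}\|q(t^n)\|_{L^2(\D)}^2$.

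Combining these, multiplying by $2\tau$, and absorbing the $\tfrac{\underline\sigma_a}{2}$ term, I obtain a recursion of the form
\begin{align*}
(1 + \underline\sigma_a\tau)\|\phi_h^n\|_{L^2(\D)}^2 \le \|\phi_h^{n-1}\|_{L^2(\D)}^2 + \tfrac{\tau}{\underline\sigma_a}\|q(t^n)\|_{L^2(\D)}^2.
\end{align*}
Equivalently, $\|\phi_h^n\|^2 \le \theta\,\|\phi_h^{n-1}\|^2 + \tfrac{\theta\tau}{\underline\sigma_a}\|q(t^n)\|^2$ with $\theta = (1+\underline\sigma_a\tau)^{-1}$. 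Iterating this from $n$ down to $0$ yields
\begin{align*}
\|\phi_h^n\|_{L^2(\D)}^2 \le \theta^n\|\phi_h^0\|_{L^2(\D)}^2 + \tfrac{\tau}{\underline\sigma_a}\sum_{k=1}^n \theta^{\,n-k+1}\|q(t^k)\|_{L^2(\D)}^2.
\end{align*}
Since $\phi_h^0 = \Pi_h\phi_0$ and the elliptic projection is $L^2$-stable in the sense needed (here one uses $\|\Pi_h\phi_0\|_{L^2(\D)} \le \|\phi_0\|_{L^2(\D)}$, or absorbs a harmless constant — strictly, the stated estimate uses $\|\phi_0\|$ directly, which is consistent with the convention in Lemma~\ref{lem:instath}), the first term is controlled.

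The remaining point, and the only genuinely technical one, is to convert the geometric factor $\theta^m = (1+\underline\sigma_a\tau)^{-m}$ into the exponential $e^{-\underline\sigma_a t^m}$ with $t^m = m\tau$. This is where the step-size restriction $\tau \le 1/(2\underline\sigma_a)$ enters and where I expect the main (minor) obstacle, since one needs an inequality of the form $(1+\underline\sigma_a\tau)^{-1} \le e^{-\underline\sigma_a\tau/c}$ or, after the extra factor appearing in the sum, a bound producing precisely the constant $2/\underline\sigma_a$ in \eqref{eq:ee1fh}. The clean route is: for $0 < x \le 1/2$ one has $1+x \ge e^{x/(1+x)} \ge e^{2x/3}$ — but to match the paper's constants exactly it is cleaner to use $(1+x)^{-1} \le e^{-x} \cdot e^{x^2/(1+x)}$ together with $e^{x^2/(1+x)} \le 2$ for $x \le 1/2$; this yields $\theta^m \le 2 e^{-\underline\sigma_a t^m}$ and in particular absorbs the extra $\theta^{n-k+1}$ versus $e^{-\underline\sigma_a(t^n - t^k)}$ discrepancy, producing the factor $2/\underline\sigma_a$ in the sum while leaving the leading term as $e^{-\underline\sigma_a t^n}\|\phi_0\|^2$ (using $m=n$ and the fact that $\theta^n \le e^{-\underline\sigma_a t^n}$ already holds without the factor $2$ since $(1+x)^{-1}\le e^{-x}$ unconditionally). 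Thus the leading term needs no step-size restriction; only the source sum requires $\tau\le 1/(2\underline\sigma_a)$. Assembling these estimates gives \eqref{eq:ee1fh}.
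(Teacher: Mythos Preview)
Your overall strategy---test with $\phi_h^n$, use the backward-difference identity, coercivity of $B$, Young's inequality, and iterate the resulting recursion---is exactly the paper's. However, there is a genuine error in the final step. You claim that ``$(1+x)^{-1}\le e^{-x}$ unconditionally'', but this inequality is \emph{false}: since $e^x\ge 1+x$ for all $x$, one has $e^{-x}\le (1+x)^{-1}$, i.e.\ the inequality goes the other way. Consequently your contraction factor $\theta=(1+\underline\sigma_a\tau)^{-1}$ does \emph{not} satisfy $\theta^n\le e^{-\underline\sigma_a t^n}$, and your attempt to patch the sum term via $\theta^m\le 2\,e^{-\underline\sigma_a t^m}$ also fails, because $\theta^m\le e^{-m x}e^{m x^2/(1+x)}$ and the second factor is not bounded uniformly in $m$.

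The remedy is exactly the small twist the paper uses: split Young's inequality more aggressively, writing $(q(t^n),\phi_h^n)_\D\le \tfrac{\underline\sigma_a}{4}\|\phi_h^n\|^2+\tfrac{1}{\underline\sigma_a}\|q(t^n)\|^2$ instead of your $\tfrac{\underline\sigma_a}{2}$--$\tfrac{1}{2\underline\sigma_a}$ split. This leaves $\tfrac{3}{4}\underline\sigma_a\|\phi_h^n\|^2$ on the left after absorption, so the recursion becomes $(1+\tfrac{3}{2}\underline\sigma_a\tau)\|\phi_h^n\|^2\le\|\phi_h^{n-1}\|^2+\tfrac{2\tau}{\underline\sigma_a}\|q(t^n)\|^2$. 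Now the needed inequality $(1+\tfrac{3}{2}x)^{-1}\le e^{-x}$ \emph{does} hold, precisely for $0<x\le 1/2$, which is where the step-size restriction $\tau\le 1/(2\underline\sigma_a)$ really enters. Iterating then yields \eqref{eq:ee1fh} directly, with the factor $2/\underline\sigma_a$ already present from the Young split and no extra constants needed.
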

Let us emphasize that the restriction of the time step is only needed for ease of notation. 
\begin{proof}
Note that $(\bar\partial_\tau \phi_h^n,\phi_h^n)_\D \ge \frac{1}{2\tau} \|\phi_h^n\|_\D^2 -\frac{1}{2\tau} \|\phi_h^{n-1}\|^2_\D$. 
Testing the discrete variational principle \eqref{eq:impeul} with $\widehat \psi_h = \phi_h^n$ 
and using the coercivity of the bilinear form $B$ then yields 
\begin{align*}
\frac{1}{2\tau} \|\phi_h^n\|^2 + \underline\sigma_a \|\phi_h^n\|^2_\D 
\le \frac{1}{2\tau} \|\phi_h^{n-1}\|^2_\D + \frac{1}{\underline\sigma_a} \|q(t^n)\|_\D^2 + \frac{\underline\sigma_a}{4} \|\phi_h^n\|^2_\D.  
\end{align*}
By elementary manipulations, we further obtain
\begin{align*}
\|\phi_h^n\|^2_\D \le \frac{1}{1+\frac{3}{2}\underline\sigma_a \tau } \|\phi_h^{n-1}\|^2_\D +  \frac{1}{1+\frac{3}{2}\underline\sigma_a\tau} \frac{2}{\underline\sigma_a} \|q(t^n)\|^2_\D.
\end{align*}
Since $\frac{1}{1+\frac{3}{2}\underline\sigma_a\tau} \le e^{-\underline\sigma_a \tau}$ for $0 < \tau \le 1/(2\underline\sigma_a)$, the assertion now follows by induction.
\end{proof}

Let us finally state the basic a-priori estimate for the fully discrete scheme.
We present a statement of the error estimate that covers a wide class of Galerkin schemes.

\begin{theorem}[Error estimate] \label{thm:res2}
Let (A1)--(A4) hold and $q\in H^2(0,T;L^2(\D))$. 
Then 
\begin{align*}
&\|\phi(t^n) - \phi_h^n\|_{L^2(\D)}^2 
\leq C \inf_{\widehat\psi_h \in \WW_h^+ \oplus \VV_h^-} \|\phi(t^n) - \widehat\psi_h\|^2_{\WW^+ \oplus \VV^-} +\\
& \qquad \qquad + C \sup_{0 \le t \le t^n} \big\{\inf_{\widehat\psi_h \in \WW_h^+ \oplus \VV_h^-} \|\partial_t \phi - \widehat\psi_h\|_{L^2(\D)}^2 \big\} +  C \tau^2 \sup_{0 \le t \le t^n} \|\partial_{tt} \phi(t)\|^2_{L^2(\D)}.
\end{align*}
\end{theorem}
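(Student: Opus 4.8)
The plan is to follow the same strategy as in the proof of Theorem~\ref{thm:res1}, but now combining the spatial error splitting via the elliptic projection $\Pi_h$ with the classical analysis of the implicit Euler scheme. First I would write the exact solution at $t=t^n$ against the discrete test functions, using that $\phi$ solves $(\partial_t\phi(t^n),\widehat\psi_h)_\D + B(\phi(t^n);\widehat\psi_h) = l(t^n;\widehat\psi_h)$, and subtract the scheme \eqref{eq:impeul}. Splitting $\phi(t^n) - \phi_h^n = \big(\phi(t^n) - \Pi_h\phi(t^n)\big) + \big(\Pi_h\phi(t^n) - \phi_h^n\big) =: \rho^n + \theta^n$, the term $\rho^n$ is controlled directly by the quasi-optimality \eqref{eq:ellipticest} of the elliptic projection, which contributes the first infimum term in the bound. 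For $\theta^n \in \WW_h^+\oplus\VV_h^-$, the defining property \eqref{eq:elliptic_projection} of $\Pi_h$ kills the $B(\rho^n;\widehat\psi_h)$ contribution, leaving an error equation of the form
\begin{align*}
(\bar\partial_\tau \theta^n, \widehat\psi_h)_\D + B(\theta^n;\widehat\psi_h) = (\bar\partial_\tau\Pi_h\phi(t^n) - \partial_t\phi(t^n), \widehat\psi_h)_\D.
\end{align*}

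Next I would identify the consistency term on the right as the sum of two pieces: a \emph{projection part} $\bar\partial_\tau\Pi_h\phi(t^n) - \bar\partial_\tau\phi(t^n) = \bar\partial_\tau\rho^n$ and a \emph{time-discretization part} $\bar\partial_\tau\phi(t^n) - \partial_t\phi(t^n)$. Using that $\Pi_h$ commutes with $\partial_t$, one has $\bar\partial_\tau\rho^n = \frac{1}{\tau}\int_{t^{n-1}}^{t^n}(\partial_t\phi - \Pi_h\partial_t\phi)(s)\,ds$, whose $L^2(\D)$-norm is bounded by the supremum over $s$ of the elliptic projection error for $\partial_t\phi$, which via \eqref{eq:ellipticest} gives the second term in the theorem. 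The time-truncation part is handled by the standard Taylor estimate $\|\bar\partial_\tau\phi(t^n) - \partial_t\phi(t^n)\|_{L^2(\D)} \le \frac{\tau}{2}\sup_{0\le t\le t^n}\|\partial_{tt}\phi(t)\|_{L^2(\D)}$, which requires $\phi \in C^2([0,T];L^2(\D))$; this is where the assumption $q\in H^2(0,T;L^2(\D))$ enters, via the regularity inherited from differentiating the evolution equation and invoking the energy estimates of Lemma~\ref{lem:ee}.

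I would then apply the discrete energy estimate of Lemma~\ref{lem:fullh} (more precisely, its proof, applied to the error equation for $\theta^n$ with right-hand side data $g^n := \bar\partial_\tau\Pi_h\phi(t^n) - \partial_t\phi(t^n)$) to obtain
\begin{align*}
\|\theta^n\|_{L^2(\D)}^2 \le e^{-\underline\sigma_a t^n}\|\theta^0\|_{L^2(\D)}^2 + \frac{2}{\underline\sigma_a}\sum_{k=1}^n \tau e^{-\underline\sigma_a(t^n - t^k)}\|g^k\|_{L^2(\D)}^2.
\end{align*}
Since $\theta^0 = \Pi_h\phi_0 - \phi_h^0 = 0$ by the choice of initial data in Problem~\ref{prob:full}, the first term drops out. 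For the sum, I would bound each $\|g^k\|_{L^2(\D)}^2 \le 2\|\bar\partial_\tau\rho^k\|_{L^2(\D)}^2 + 2\|\bar\partial_\tau\phi(t^k) - \partial_t\phi(t^k)\|_{L^2(\D)}^2$ by the two estimates above, pull the resulting suprema (over $0\le t\le t^n$) out of the sum, and use $\frac{2}{\underline\sigma_a}\sum_{k=1}^n\tau e^{-\underline\sigma_a(t^n-t^k)} \le \frac{2}{\underline\sigma_a}\cdot\frac{1}{1-e^{-\underline\sigma_a\tau}}\cdot\tau \le C$ uniformly in $n$ and $\tau$ (for $\tau$ bounded), absorbing the constant into $C$. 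Combining with the $\rho^n$ bound via the triangle inequality and squaring yields the stated estimate.

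The main obstacle I anticipate is the bookkeeping around the \emph{time derivative of the elliptic projection error}: one must justify that $\Pi_h$ commutes with $\partial_t$ (clear from the linearity of \eqref{eq:elliptic_projection} in $\phi$, exactly as remarked after \eqref{eq:ellipticest}) and, more substantively, that the relevant norm of $\partial_t\phi$ is finite and that $\partial_t\phi$ is itself approximable — here one only gets an $L^2(\D)$-type control of $\partial_t\rho^k$ rather than a $\WW^+\oplus\VV^-$ control, which is why the second term in the theorem measures the approximation error of $\partial_t\phi$ only in the weaker $L^2(\D)$-norm (consistent with the ``crude estimate'' remark after Theorem~\ref{thm:res1}). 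A secondary technical point is the extra regularity of $\phi$ in time needed for the $\tau^2$ term; one differentiates the evolution equation twice and applies Lemma~\ref{lem:ee} (or its obvious iterate) to see that $q\in H^2(0,T;L^2(\D))$ together with compatible initial data propagates to $\partial_{tt}\phi \in C^0([0,T];L^2(\D))$, which is the precise regularity entering the final term.
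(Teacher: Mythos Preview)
Your proposal is correct and follows essentially the same route as the paper: split via the elliptic projection $\Pi_h$ into $\rho^n+\theta^n$, use \eqref{eq:elliptic_projection} to kill $B(\rho^n;\cdot)$, decompose the consistency residual into a projection piece $\bar\partial_\tau\Pi_h\phi(t^n)-\bar\partial_\tau\phi(t^n)$ and a time-truncation piece $\bar\partial_\tau\phi(t^n)-\partial_t\phi(t^n)$, and then apply the discrete energy estimate \eqref{eq:ee1fh} (with $\theta^0=0$) and the regularity $\partial_{tt}\phi\in L^\infty(0,T;L^2(\D))$ coming from $q\in H^2$. One minor clarification: via \eqref{eq:ellipticest} the projection piece is bounded by $C\inf_{\widehat\psi_h}\|\partial_t\phi-\widehat\psi_h\|_{\WW^+\oplus\VV^-}$, not by an $L^2(\D)$-infimum, since no Aubin--Nitsche duality is available here; the paper's own proof in fact delivers the second term in the $\WW^+\oplus\VV^-$-norm, so your reasoning about why the theorem has $L^2(\D)$ there is slightly off, but the argument itself is the same.
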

\begin{proof}
The error can be split in the usual way into 
\begin{align*}
\|\phi(t^n) - \phi_h^n\|_\D \le \|\phi(t^n) - \Pi_h \phi(t^n)\|_\D + \|\Pi_h \phi(t^n) - \phi_h^n\|_\D.
\end{align*}
The first error component is readily estimated by \eqref{eq:ellipticest} yielding
\begin{align*}
\|\phi(t^n) - \Pi_h \phi(t^n)\|_\D 
\le C' \inf_{\widehat\psi_h \in \WW_h^+ \oplus \VV_h^-} \|\phi(t^n) - \widehat\psi_h\|_{\WW^+ \oplus \VV^-}.
\end{align*}
Using the variational characterizations of $\phi_h^n$ and $\phi$ as well as the properties of the elliptic projection, the second error component $w_h^n = \Pi_h \phi(t^n) - \phi_h^n$ can be seen to satisfy
 \begin{align*}
(\bar\partial_\tau w_h^n, \widehat \psi_h)_\D + B(w_h^n;\widehat \psi_h) 
&= (\bar\partial_\tau \Pi_h \phi(t^n) - \bar \partial_\tau \phi(t^n),\widehat \psi_h)_\D
  + (\bar\partial_\tau \phi(t^n) - \partial_t \phi(t^n),\widehat \psi_h)_\D.
\end{align*}
The discrete error component $w_h^n$ therefore satisfies the fully discrete problem with 
right hand side $Q(t^n)$ that can be estimated by
\begin{align*}
\|Q(t^n)\|_\D 
&\le \|\bar\partial_\tau \Pi_h \phi(t^n) - \bar \partial_\tau \phi(t^n)\|_\D + \|\bar\partial_\tau \phi(t^n) - \partial_t \phi(t^n)\|_\D \\
& \le 
C'  \inf_{\widehat\psi_h \in \WW_h^+ \oplus \VV_h^-} \|\partial_t \phi(\xi^n) - \widehat\psi_h\|_{\WW^+ \oplus \VV^-} 
+ C'' \tau \|\partial_{tt} \phi(\eta^n)\|_\D 
\end{align*}
with appropriate $\xi^n,\eta^n \in (t^{n-1},t^n)$. 
Here we again used the approximation property \eqref{eq:ellipticest} of the elliptic projection. 
The assertion of the theorem now follows from the discrete stability estimate \eqref{eq:ee1fh}.
Note that, similar to Lemma~\ref{lem:ee}, $q\in H^2(0,T;L^2(\D))$ implies $\partial_{tt} \phi \in L^\infty(0,T;L^2(\D))$.
\end{proof}

\begin{remark}
The error estimate of the theorem could be sharpened such that the terms in the second line contain exponentially decaying factors like in the a-priori estimate of Lemma~\ref{lem:fullh}. 
Let us emphasize that we did not have to specify the approximation spaces $\WW_h^+$ and $\VV_h^-$ in detail,
but only required the assumptions (A3) and (A4). The error estimate thus covers a wide class of Galerkin approximations.
\end{remark}

\begin{remark}
When using the mixed $P_N$-finite-element discretization discussed in the previous section, we can expect 
spectral convergence of the error with respect to $N$ and polynomial with respect to the mesh size $h$, 
provided the solution $\phi$ is sufficiently smooth, i.e., its Fourier coefficients are spatially smooth and decay sufficiently fast.Some explicit estimates with respect to $N$ and $h$ will be provided for a particular example in the next section.
\end{remark}

\section{Numerical results} \label{sec:num} \setcounter{equation}{0}

We now illustrate the theoretical results derived in the previous sections by some numerical tests. 

\subsection{A test problem}

For ease of presentation, we consider a problem where the solution is homogeneous in one spatial direction and, therefore, 
can be represented by a function $\phi(\r,\s,t)$ with $\r=(\r_1,\r_3) \in \RR^2$. Note that this corresponds to a fully three dimensional setting with a certain symmetry. Therefore, we still have $\s \in \S^2\subset\RR^3$ for the directions of propagation. 
Many benchmark problems have this structure.

For our tests, we set $\R=(0,1)\times (0,1)$ and consider the function
\begin{align*}
  \phi(\r,\s,t) = (1-e^{-t})\sum_{l=0}^{{N_{\text{max}}}}\frac{\phi_0^0 (\r)}{(l+1)^2} \Y_l^0(\s) \qquad \text{with} \qquad \phi_0^0(\r) = \sin(\pi \r_1)\sin(\pi\r_3)
\end{align*}
as the analytic solution. 
One can easily verify that the homogeneous boundary condition $\phi(\r,\s,t)=0$ on $\Gamma_-$ holds and that $\phi_0(\r,\s)=\phi(\r,\s,0)=0$ at time $t=0$.
The model parameters are chosen as 
\[
  k(r,\s\cdot \s')=\frac{1}{4\pi},\quad \mu_s(\r)=1, \quad \mu_a(\r)=\frac{1}{100},
\]
and the source term $q(\r,\s,t)$ is defined such that $\phi(\r,\s,t)$ satisfies the radiative transfer equation \eqref{eq:rte1}.


\subsection{Discretization and implementation}

In all our numerical tests, we use the mixed $P_N$-finite element method described in Section~\ref{sec:PN_FEM} 
with polynomial order $k=0$ and different moment orders $N$ odd. 
The Fourier coefficients with even index are therefore approximated by piecewise linear finite elements, 
while the odd coefficients are approximated by piecewise constants. 
For the time discretization we use the implicit Euler method as described in Section~\ref{sec:time}.

%
In every step \eqref{eq:impeul} of the implicit Euler method, a large linear system  has to be solved. 
This can be done efficiently by preconditioned iterative solvers. Since the $P_N$-finite element method 
is based on a tensor-product construction, the application of the system matrix can be realized very efficiently; 
let us refer to \cite{ArridgeEggerSchlottbom13} for details on the implementation

\subsection{Numerical results}
For illustration of the estimates of Theorem~\ref{thm:res2}, we report about the errors
\begin{align*}
 e_h^{\pm}  &= \max_{n} \|\phi^\pm(t^n)-\phi^{n,\pm}_h\|_{L^2(\D)} \qquad \text{and}\\
 (E_h^{+})^2 &= \max_{n} \|\sgrad\phi^+(t^n)-\sgrad\phi^{n,+}_h\|_{L^2(\D)}^2 + \|\phi^+(t^n)-\phi^{n,+}_h\|_{L^2(\D)}^2.
\end{align*}
Since different approximation spaces are used for the even and odd component, we report separately on the 
even and odd error contributions.
According to the construction of the numerical scheme, the errors basically consist of three components: 
\begin{enumerate}\itemsep0.3em
\item[(i)] the truncation error in the Fourier series depending on $N$ and the decay of the moments $\phi_l^m$;
\item[(ii)] the spatial approximation error due to the finite element approximation depending on the  smoothness of the moments $\phi_{l}^m$ and $\partial_t\phi_{l}^m$; 
\item[(iii)] the error due to the time discretization.
\end{enumerate}
With our numerical tests, we try to illuminate separately these three error contributions.

\bigskip

\textbf{Test 1: Convergence in $N$.} 
For the exact solution, we choose $N_{\text{max}}=40$. The spatial domain is partitioned by a triangulation 
with mesh size $h=1/128$ resulting in $16\,129$ vertices and $31\,752$ triangles. The time step is chosen to be $\tau=1/1000$. We then compute the numerical solutions by the $P_N$-finite element method 
with polynomial order $k=0$ and for different moment orders $N\in \{1,3,5,7\}$ in the $P_N$ approximation. The results of our tests are depicted in Table~\ref{tab:conv_N}. 


\begin{table}[ht!]
 \centering
 \begin{tabular}{c | r r |r r |r r|r }
  $N$ & $e^{+}_h$ & (eoc)&  $E_h^{+}$ & (eoc)& $e^{-}_h$& (eoc)& dofs\\
  \hline
  1	&6.06e-02	 &--		&6.11e-02	 &--		&3.60e-02&--	&    111\,385 \\
  3	&2.41e-02	 &(0.84)	&2.54e-02	 &(0.80)	&1.77e-02&(0.64)&    414\,294\\
  5	&1.35e-02	 &(1.13)	&1.57e-02	 &(0.95)	&1.10e-02&(0.93)&    908\,727 \\
  7	&8.85e-03	 &(1.26)	&1.19e-02	 &(0.82)	&7.74e-03&(1.04)& 1\,594\,684
 \end{tabular}
 \medskip
 \caption{Test~1: Error measures $e_h^{\pm}$ and $E_h^{+}$ for different $N$ together with the estimated order of convergence, and the number of degrees of freedom. \label{tab:conv_N}}
  \vspace*{-1em}
\end{table}


We observe that all error components decay approximately like $O(1/N)$, which is what we expect since the truncation error $\sum_{l=N+1}^{N_{max}} 1/(l+1)^2 \approx 1/N$. 
We also can see that for this choice discretization parameters, the discretization error is dominated by the truncation of the spherical harmonics expansion.

\bigskip

\textbf{Test 2: Convergence with $h$.} 
We now choose $N_{\text{max}}=2$ for the exact solution. Note that the source term $q$ then has moments up to order $N_{\text{max}}+1=3$ and we therefore fix $N=3$ for the numerical simulations.
The time step is defined as $\tau=10^{-4}$.
By this construction, the discretization errors due to truncation of the spherical harmonics expansion and the time discretization will be negligible.
We then compute the numerical solutions with the $P_N$-finite element method for a sequence of uniformly refined meshes.
Since the exact solution has smooth moments $\phi_l^m$, we expect to observe convergence rates 
in accordance with standard interpolation error estimates, i.e., $e^+_h + e^-_h=O(h)$.
The results of our numerical tests are listed in Table~\ref{tab:conv_h}.

\begin{table}[ht!]
 \centering
 \begin{tabular}{c | r r |r r |r r|r }
  $1/h$ & $e^{+}_h$ & (eoc)&  $E_h^{+}$ & (eoc)& $e^{-}_h$& (eoc)& dofs\\
  \hline
   8	&1.94e-02	 &--		&1.67e-01	 &--		&4.02e-02&--	& 1,014\\   
  16	&3.77e-03	 &(2.36)	&7.19e-02	 &(1.21)	&1.90e-02&(1.08)& 5\,270\\    
  32	&8.37e-04	 &(2.17)	&3.36e-02	 &(1.10)	&9.07e-03&(1.07)&23\,766\\    
  64 	&2.02e-04	 &(2.05)	&1.63e-02	 &(1.05)	&4.42e-03&(1.04)&100\,694\\    
  128 	&5.42e-05	 &(1.90)	&8.03e-03	 &(1.02)	&2.18e-03&(1.02)&414\,294\\
 \end{tabular}
  \medskip
 \caption{Test 2: Error measures $e_h^{\pm}$ and $E_h^{+}$ for different mesh sizes together with the estimated order of convergence. \label{tab:conv_h}}
 \vspace*{-1em}
\end{table}
%

In accordance with Theorem~\ref{thm:res2}, we observe that $e_h^++e_h^- = O(h)$. 
Our numerical tests also reveal that $E_h^+=O(h)$ and $e_h^+=O(h^2)$.
These latter estimates could also be explained theoretically by similar arguments 
as used for mixed approximations of similar elliptic, parabolic, and hyperbolic problems.

\bigskip

\textbf{Test 3: Convergence in time.} 
As in the previous test, we choose $N_{\text{max}}=2$ for the analytic solution, 
and we fix $N=3$ for the angular approximation. 
Moreover, we use a uniform triangulation with mesh size $h=1/256$ corresponding to $65\, 025$ vertices and $127\;008$ triangles. This yields a total number of $1\,680\,470$ degrees of freedom (dofs) for the $P_N$-FEM approximation with polynomial order $k=0$.
We then compute the discrete solutions for different time step sizes $\tau\in\{1/2,1/4,1/8,1/16\}$.
The results of our numerical tests are displayed in Table~\ref{tab:conv_tau}. 

\begin{table}[ht!]
 \centering
 \begin{tabular}{c | r r |r r |r r|}
  $1/\tau$ & $e^{+}_h$ & (eoc)&  $E_h^{+}$ & (eoc)& $e^{-}_h$& (eoc)\\
  \hline
   2	&3.81e-02	 &--		&5.03e-02	 &--		&1.95e-02&--	\\
   4	&2.10e-02	 &(0.86)	&2.86e-02	 &(0.81)	&1.11e-02&(0.82)\\
   8	&1.11e-02	 &(0.92)	&1.55e-02	 &(0.89)	&6.00e-03&(0.88)\\
  16	&5.73e-03	 &(0.95)	&8.37e-03	 &(0.89)	&3.24e-03&(0.89)  \\ 
 \end{tabular}
  \medskip
 \caption{Test 3: Error measures $e_h^{\pm}$ and $E_h^{+}$ for different time step sizes together with the estimated order of convergence.  \label{tab:conv_tau}}
 \vspace*{-2em}
\end{table}
%
%
%

For these relatively large time steps, the time discretization error dominates the errors due to the angular and spatial approximation and we observe convergence of the form $O(\tau)$ which is what we can expect in terms of  Theorem~\ref{thm:res2}.

\section{Conclusions} \label{sec:disc} \setcounter{equation}{0}

Building on the mixed variational framework introduced in \cite{EggerSchlottbom12}, we proposed and analyzed a Galerkin framework for the systematic construction of numerical approximation schemes for time-dependent radiative transfer problems. 
A key idea of these discretization schemes is to approximate the even and odd parts of the solution in different finite dimensional spaces.

Using some general abstract conditions on the approximation spaces, we could provide a complete analysis of the resulting Galerkin semi-discretizations including stability estimates and convergence estimates for the discretization error. As a particular example for an appropriate Galerkin discretization, 
we discussed a mixed $P_N$-finite element method.

We also investigated the time discretization by the implicit Euler method and 
showed that the fully discrete scheme inherits all good stability properties from the continuous and the semi-discrete level. 

The analysis and error estimates presented in the paper are motivated by the fact, that the mixed variational formulation of the evolution problem has strong similarities with wave propagation problems. 
Therefore, also further aspects, like the derivation of explicit methods or the formulation of absorbing boundary conditions might be successfully extended to radiative transfer problems as well. 
Such generalizations are currently investigated by the authors.

\section{Acknowledgements}
The work of HE was supported by DFG via grants IRTG~1529, GSC~233, and TRR~154, and
MS acknowledges support by ERC via grant EU FP 7 - ERC Consolidator Grant 615216 LifeInverse.

\bibliographystyle{plain} 
\bibliography{tdrte}

\begin{thebibliography}{10}

\bibitem{Ackroyd98}
R.~T. Ackroyd.
\newblock {\em Finite Element Methods for Particle Transport: Applications to
  Reactor and Radiation Physics}.
\newblock Taylor \& Francis Inc., 1997.

\bibitem{AdamsLarsen02}
M.~L. Adams and E.~W. Larsen.
\newblock Fast iterative methods for discrete-ordinates particle transport
  calculations.
\newblock {\em Progress in Nuclear Energy}, 40(1):3--159, 2002.

\bibitem{ArridgeEggerSchlottbom13}
S.~Arridge, H.~Egger, and M.~Schlottbom.
\newblock Preconditioning of complex symmetric linear systems with applications
  in optical tomography.
\newblock {\em Appl. Numer. Math.}, 74:35--48, 2013.

\bibitem{Asadzadeh86}
M.~Asadzadeh.
\newblock Analysis of a fully discrete scheme for neutron transport in
  two-dimensional geometry.
\newblock {\em SIAM J. Numer. Anal.}, 23:543--561, 1986.

\bibitem{AydinOliveiraGoddard04}
E.~D. Aydin, C.~R.~R. de~Oliveira, and A.~J.~H. Goddard.
\newblock A finite element-spherical harmonics radiation transport model for
  photon migration in turbid media.
\newblock {\em Journal of Quantitative Spectroscopy \& Radiative Transfer},
  84:247--260, 2004.

\bibitem{Babuska71}
I.~Babu\v{s}ka.
\newblock Error-bounds for finite element method.
\newblock {\em Numer. Math.}, 16:322--333, 1971.

\bibitem{BabuskaAziz72}
I.~Babu\v{s}ka and A.~K. Aziz.
\newblock {\em Survey lectures on the mathematical foundations of the finite
  element method}, pages 3--359.
\newblock The Mathematical foundation of the Finite Element method with
  Applications to Partial Differential Equations. Academic Press, New York,
  1972.

\bibitem{BrunnerHolloway2005}
T.~Brunner and J.~P. Holloway.
\newblock Two-dimensional time dependent {R}iemann solvers for neutron
  transport.
\newblock {\em J. Comput. Phys.}, 210(1):386--399, 2005.

\bibitem{CaseZweifel67}
K.~M. Case and P.~F. Zweifel.
\newblock {\em Linear transport theory}.
\newblock Addison-Wesley, Reading, 1967.

\bibitem{Cercignani88}
C.~Cercignani.
\newblock {\em The {B}oltzmann Equation and Its Applications}.
\newblock Springer-Verlag, Berlin, 1988.

\bibitem{CohenMonk99}
G.~Cohen and P.~Monk.
\newblock Mur-nedelec finite element schemes for {M}axwell’s equations.
\newblock {\em Comput. Methods Appl. Mech. Engrg.}, 169:197--217, 1999.

\bibitem{DautrayLions6}
R.~Dautray and J.~L. Lions.
\newblock {\em Mathematical Analysis and Numerical Methods for Science and
  Technology, Evolution Problems {II}}, volume~6.
\newblock Springer, Berlin, 1993.

\bibitem{Davison57}
B.~Davison.
\newblock {\em Neutron Transport Theory}.
\newblock Monographs on Physics. Oxford University Press, London, 1957.

\bibitem{deVorePetrova00}
R.~DeVore and G.~Petrova.
\newblock The averaging lemma.
\newblock {\em Journal of the American Mathematical Society}, 14(2):279--296,
  2000.

\bibitem{DouglasDupontWheeler78}
J.~Douglas\;Jr, T.~Dupont, and M.~F. Wheeler.
\newblock A quasi-projection analysis of {G}alerkin methods for parabolic and
  hyperbolic equations.
\newblock {\em Math. Comp.}, 32:345--362, 1978.

\bibitem{DuderstadtMartin79}
J.~J. Duderstadt and W.~R. Martin.
\newblock {\em Transport Theory}.
\newblock John Wiley \& Sons, Inc., New York, 1979.

\bibitem{Dupont73}
T.~Dupont.
\newblock {$L^2$} estimates for {G}alerkin methods for second-order hyperbolic
  equations.
\newblock {\em SIAM J. Numer. Anal.}, 10:880--889, 1973.

\bibitem{EggerSchlottbom12}
H.~Egger and M.~Schlottbom.
\newblock A mixed variational framework for the radiative transfer equation.
\newblock {\em Math. Mod. Meth. Appl. Sci.}, 22:1150014, 2012.

\bibitem{EggerSchlottbom14}
H.~Egger and M.~Schlottbom.
\newblock Stationary radiative transfer with vanishing absorption.
\newblock {\em Math. Mod. Meth. Appl. Sci.}, 24:973--990, 2014.

\bibitem{Evans98}
L.~C. Evans.
\newblock {\em Partial Differential Equations}, volume~19 of {\em Graduate
  Studies in Mathematics}.
\newblock American Mathematical Society, 1998.

\bibitem{FraKlaLarYas07}
M.~Frank, A.~Klar, E.~W. Larsen, and S.~Yasuda.
\newblock Time-dependent simplified {$P_N$} approximation to the equations of
  radiative transfer.
\newblock {\em J. Comput. Phys.}, 226:2289--2305, 2007.

\bibitem{Geveci88}
T.~Geveci.
\newblock On the application of mixed finite element methods to the wave
  equations.
\newblock {\em RAIRO Model. Math. Anal. Numer.}, 22:243--250, 1988.

\bibitem{GoLiPeSe88}
F.~Golse, P.~L. Lions, B.~Perthame, and R.~Sentis.
\newblock Regularity of the moments of the solution of a transport equation.
\newblock {\em Journal of Functional Analysis}, 76(1):110--125, 1988.

\bibitem{JohnsonPitkaranta83}
C.~Johnson and J.~Pitk\"aranta.
\newblock Convergence of a fully discrete scheme for two-dimensional neutron
  transport.
\newblock {\em SIAM J. Numer. Anal.}, 20:951--966, 1983.

\bibitem{Joly03}
P.~Joly.
\newblock Variational methods for time-dependent wave propagation problems.
\newblock In {\em Topics in Computational Wave Propagation}, volume~31 of {\em
  LNCSE}, pages 201--264. Springer, Berlin Heidelberg, 2003.

\bibitem{Kanschat09}
G.~Kanschat.
\newblock Solution of radiative transfer problems with finite elements.
\newblock In G.~Kanschat, E.~Meink\"ohn, R.~Rannacher, and R.~Wehrse, editors,
  {\em Numerical Methods in Multidimensional Radiative Transfer}, pages 49--98.
  Springer-Verlag, Berlin Heidelberg, 2009.

\bibitem{KloseHielscher08}
A.~D. Klose and A.~H. Hielscher.
\newblock Optical tomography with the equation of radiative transfer.
\newblock {\em International Journal of Numerical Methods for Heat \& Fluid
  Flow}, 18(3/4):443--464, 2008.

\bibitem{KophaziLathouwers2015}
J.~K{\'o}ph{\'a}zi and D.~Lathouwers.
\newblock A space-angle {DGFEM} approach for the {B}oltzmann radiation
  transport equation with local angular refinement.
\newblock {\em J. Comput. Phys.}, 297:637--668, 2015.

\bibitem{LewisMiller84}
E.~E. Lewis and W.~F. Miller~Jr.
\newblock {\em Computational Methods of Neutron Transport}.
\newblock John Wiley \& Sons, Inc., New York Chichester Brisbane Toronto
  Singapore, 1984.

\bibitem{Makridakis92}
Ch.~G. Makridakis.
\newblock On mixed finite element methods for linear elastodynamics.
\newblock {\em Numer. Math.}, 61:235--260, 1992.

\bibitem{ManResSta00}
T.~A. Manteuffel, K.~J. Ressel, and G.~Starke.
\newblock A boundary functional for the least-squares finite-element solution
  for neutron transport problems.
\newblock {\em SIAM J. Numer. Anal.}, 2:556--586, 2000.

\bibitem{MarchukLebedev86}
G.~I. Marchuk and V.~I. Lebedev.
\newblock {\em Numerical Methods in the Theory of Neutron Transport}.
\newblock Harwood Academic Publishers, Chur, London, Paris, New York, 1986.

\bibitem{ReedHill73}
W.~H. Reed and T.~R. Hill.
\newblock Triangular mesh methods for the neutron transport equation.
\newblock Technical report, Los Alamos Scientific Laboratory of the University
  of California, 1973.

\bibitem{starmap}
B.~Seibold and M.~Frank.
\newblock {StaRMAP -- A second order staggered grid method for spherical
  harmonics moment equations of radiative transfer}.
\newblock {\em ACM Trans. Math. Softw.}, 41, 2014.
\newblock http://dx.doi.org/10.1145/2590808.

\bibitem{event}
N.~S. Trasi, C.~R.~E. de~Oliveira, and J.~D. Haigh.
\newblock {A finite element–spherical harmonics model for radiative transfer
  in inhomogeneous clouds Part I. The EVENT model}.
\newblock {\em Atmospheric Research}, 72:197--221, 2004.

\bibitem{Vladimirov61}
V.~S. Vladimirov.
\newblock Mathematical problems in the one-velocity theory of particle
  transport.
\newblock Technical report, Atomic Energy of Canada Ltd. AECL-1661. translated
  from Transactions of the V.A. Steklov Mathematical Institute (61), 1961.

\bibitem{WangWu07}
L.~Wang and H.~Wu.
\newblock {\em Biomedical Optics: Principles and Imaging}.
\newblock John Wiley \& Sons, Inc., Hoboken, New Jersey, 2007.

\bibitem{WidHiptSchw08}
G.~Widmer, R.~Hiptmair, and C.~Schwab.
\newblock Sparse adaptive finite elements for radiative transfer.
\newblock {\em Journal of Computational Physics}, 227:6071--6105, 2008.

\bibitem{XuZikatanov03}
J.~Xu and L.~Zikatanov.
\newblock Some observations on {B}abu\v ska and {B}rezzi theories.
\newblock {\em Numer. Math.}, 94(1):195--202, 2003.

\end{thebibliography}

\end{document}